\date{Sept.~14, 2010}
\newtheorem{dummy}{anything}[section]
\newtheorem{theorem}[dummy]{Theorem}
\newtheorem*{thma}{Theorem A}
\newtheorem*{thmb}{Theorem B}
\newtheorem{lemma}[dummy]{Lemma}
\newtheorem{corollary}[dummy]{Corollary}
\theoremstyle{definition}
  \newtheorem{example}[dummy]{Example}
  \newtheorem{remark}[dummy]{Remark}
  \newtheorem*{acknowledgement}{Acknowledgement}
\newcommand
{\eqncount}{\setcounter{equation}{\value{dummy}}%
\addtocounter{dummy}{1}}
\newcommand{\cI}{\mathscr I}
\newcommand{\cL}{\mathcal L}
\newcommand{\cN}{\mathcal N}
\newcommand{\cS}{\mathscr S}
\newcommand{\bZ}{\mathbb Z}
\newcommand{\bP}{\mathbf P}
\newcommand{\CP}{\mathbf C \bP}
\newcommand{\bbC}{\mathbb C}
\newcommand{\bbL}{\mathbb L}
\newcommand{\bbQ}{\mathbb Q}
\newcommand{\bbR}{\mathbb R}
\newcommand{\bbZ}{\mathbb Z}
\DeclareMathOperator{\Hom}{Hom}
\DeclareMathOperator{\Image}{im}
\DeclareMathOperator{\hdim}{hom\,dim}
\DeclareMathOperator{\invlim}{\lower 6pt\hbox{$\stackrel{\displaystyle{\lim}}{\leftarrow}$}}
\DeclareMathOperator{\dirlim}{\lower 6pt\hbox{$\stackrel{\displaystyle{\lim}}{\rightarrow}$}}
\newcommand{\wO}{\widetilde{\Omega}\hphantom{}}
\newcommand{\wKO}{\widetilde{KO}\hphantom{}}
\newcommand{\cy}[1]{\bZ/{#1}}
\newcommand{\la}{\langle}
\newcommand{\ra}{\rangle}
\newcommand{\bd}{\partial}
\newcommand{\id}{\mathrm{id}}
\def\Zp{\bZ_{(p)}}
\def\Zodd{\bZ[1/2]}
\def\Ztwo{\bZ_{(2)}}
\def\La{\Lambda}
\DeclareMathOperator{\pt}{pt}
\DeclareMathOperator{\eval}{\mathbf{eval}}
\newcommand{\Zpi}{\bZ\pi}
\newcommand{\bast}{\ast}
\newcommand{\SC}{\mathfrak C}
\newcommand{\SI}{\mathfrak I}
\newcommand{\SA}{\mathfrak A}
\newcommand{\ko}{\mathbf{ko}}
\newcommand{\ku}{\mathbf{ku}}
\newcommand{\bo}{\mathbf{bo}}
\newcommand{\bu}{\mathbf{bu}}
\newcommand{\Kpi}{B\pi}
\DeclareMathOperator{\sign}{index}
\newcommand{\co}{[1]}
\begin{document}

\title{Surgery obstructions on closed manifolds and the Inertia subgroup}
\author{Ian Hambleton}

\address{Department of Mathematics, McMaster University,  Hamilton, Ontario L8S 4K1, Canada}

\email{hambleton@mcmaster.ca }

\thanks{Research partially supported by NSERC Discovery Grant A4000. The author would like  to thank the Max Planck Institut f\"ur Mathematik in Bonn for its hospitality and support while this work was done.}

\begin{abstract}
The Wall surgery obstruction groups have two interesting geometrically defined subgroups, consisting of the surgery obstructions between closed manifolds, and the inertial elements. 
We show that  the inertia group $I_{n+1}(\pi,w)$ and the closed manifold subgroup $C_{n+1}(\pi,w)$ are equal in dimensions $n+1\geq 6$, for any finitely-presented group $\pi$ and any orientation character $w\colon \pi \to \cy 2$. This answers a question from \cite[p.~107]{h3}. 
\end{abstract}

\maketitle

\section{Introduction}
\label{sect: introduction}
Let $\pi$ be a finitely-presented group, and let $L_n(\Zpi,w)$ denote Wall's surgery obstruction group for oriented surgery problems up to simple homotopy equivalence, where $w\colon \pi \to\cy 2$ is an orientation character (see \cite[Chap.~5-6]{wall-book}). We work with topological (rather than smooth) manifolds throughout, so rely on the work of Kirby-Siebenmann \cite{kirby-siebenmann1} for the extension of surgery theory to the topological category. 

\smallskip
Let $X^{n}$ be a closed, topological $n$-manifold, $n \geq 5$, and let $c\colon X \to \Kpi$ denote the classifying map of its universal covering, so that $c_*\colon \pi_1(X, x_0) \xrightarrow{\approx} \pi$ is a given isomorphism. The orientation class
$w_1(X) \in H^1(X;\cy 2)$  induces an orientation character $w\colon \pi \to \cy 2$.
 The surgery exact sequence
 $$  [\Sigma(X), G/TOP] \xrightarrow{\sigma_{n+1}(X)} L_{n+1}(\Zpi,w) \longrightarrow \cS(X) \longrightarrow [X, G/TOP] \xrightarrow{\sigma_n(X)} L_n(\Zpi,w)$$
 developed by Browder, Novikov, Sullivan and Wall \cite[Chap.~9]{wall-book} relates the classification of manifolds which are simple homotopy equivalent to $X$ to the calculation of the surgery obstruction maps $\sigma_{n+1}(X)$ and $\sigma_n(X)$. 
 
In the surgery exact sequence $\Sigma(X) = (X\times I)/\bd(X\times I)$, and $\cS(X)$ denotes the $s$-cobordism classes of pairs $(M,f)$, where $f\colon M \to X$ is a simple homotopy equivalence. For a suitable $H$-space structure on $G/TOP$ (see \cite{kirby-siebenmann1}, \cite{madsen-milgram1}, \cite{ranicki-assembly}), these surgery obstruction maps are  homomorphisms between abelian groups.

 For a fixed $(X,w)$, let $C_{n}(X,w) \subseteq L_{n}(\Zpi,w)$
  denote the image of $\sigma_{n}(X)$.  This is the subgroup of $L_{n}(\Zpi,w)$ given by the surgery obstructions of all degree 1 normal maps $(f,b)\colon M \to X$ from some closed $n$-manifold $M$. By varying $X$ over all closed manifolds with the same orientation character $w$, we define the \emph{closed manifold subgroup}
  \eqncount
\begin{equation}\label{closed_subgp}
 C_{n}(\pi,w) \subseteq L_{n}(\Zpi,w)\end{equation}
  as the subgroup of the $L$-group generated by all of the closed manifold  subgroups $C_{n}(X,w)$.
  In the oriented case ($w\equiv 1$), $C_n(\pi)$ is just the image of the 
Sullivan-Wall homomorphism \cite[13B.3]{wall-book}
$$\Omega_n(\Kpi \times G/TOP, \Kpi \times \ast) \to L_n(\Zpi)$$
defined by the surgery obstruction. 

 For a fixed $(X,w)$, let $I_{n+1}(X,w) \subseteq L_{n+1}(\Zpi,w)$
  denote the image of $\sigma_{n+1}(X)$.  This is the subgroup of $L_{n+1}(\Zpi,w)$ which acts trivially on the structure set $\cS(X)$. The surgery exact sequence (and the $s$-cobordism theorem) shows that the elements of $I_{n+1}(X,w)$ are exactly the surgery obstructions of \emph{relative} degree 1 normal maps 
  $$(f,b)\colon (W, \bd W) \to (X\times I, X \times \bd I),$$ where the $f$ restricted to the boundary $\bd W$ is a homeomorphism. By glueing a copy of $X \times I$ along the boundary components in domain and range, we obtain a closed manifold surgery problem $W \cup_{\bd W} (X \times I) \to X \times S^1$. By varying $X$ over all closed manifolds with the same orientation character $w$, we define the \emph{inertia subgroup}
  \eqncount
  \begin{equation}\label{inertia_subgp}
   I_{n+1}(\pi,w) \subseteq L_{n+1}(\Zpi,w)\end{equation}
  as the subgroup of the $L$-group generated by all of the inertia groups $I_{n+1}(X,w)$. By construction, $I_{n+1}(\pi,w) \subseteq C_{n+1}(\pi,w)$ for all fundamental group data $(\pi,w)$, and $n \geq 5$.
  Here is our main result:
\begin{thma} Let $\pi$ be a finitely-presented group and  $w\colon \pi \to \cy 2$ an orientation character. The inertia subgroup $I_{n+1}(\pi,w)$ equals the group of closed manifold surgery obstructions $C_{n+1}(\pi,w) \subseteq L_{n+1}(\Zpi),w)$, for all $n\geq 5$.
\end{thma}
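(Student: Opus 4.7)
The containment $I_{n+1}(\pi,w)\subseteq C_{n+1}(\pi,w)$ is noted in the introduction. For the reverse inclusion my plan is to identify both subgroups with the image of Ranicki's assembly map
\[
A\colon H_{n+1}(B\pi;\bbL\la 1\ra^w)\to L_{n+1}(\Zpi,w),
\]
where $\bbL\la 1\ra$ denotes the $1$-connective quadratic $L$-theory spectrum (whose zero-space is $G/TOP$). Once both subgroups are shown to equal $\mathrm{image}(A)$, the theorem follows.

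Both surgery obstructions admit parallel factorizations through $A$. For a closed $(n+1)$-manifold $X$ with classifying map $c\colon X\to B\pi$, one has
\[
\sigma_{n+1}(X)\colon [X,G/TOP]\cong H^0(X;\bbL\la 1\ra)\xrightarrow{\cap [X]}H_{n+1}(X;\bbL\la 1\ra^w)\xrightarrow{c_*}H_{n+1}(B\pi;\bbL\la 1\ra^w)\xrightarrow{A}L_{n+1}(\Zpi,w).
\]
For a closed $n$-manifold $Y$, Lefschetz duality on the Poincar\'e pair $(Y\times I, Y\times\bd I)$ gives $[\Sigma(Y),G/TOP]\cong H_{n+1}(Y;\bbL\la 1\ra^w)$, and the relative surgery obstruction factors as $[\Sigma(Y),G/TOP]\xrightarrow{c_*}H_{n+1}(B\pi;\bbL\la 1\ra^w)\xrightarrow{A}L_{n+1}(\Zpi,w)$. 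In both cases the image is contained in $\mathrm{image}(A)$; conversely, Ranicki's realization of $\bbL\la 1\ra$-homology classes by closed-manifold normal invariants (valid for $n+1\geq 5$) identifies $C_{n+1}(\pi,w)$ with $\mathrm{image}(A)$.

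The theorem therefore reduces to showing $I_{n+1}(\pi,w)=\mathrm{image}(A)$. A key observation is that the Atiyah--Hirzebruch spectral sequence computing $H_{n+1}(-;\bbL\la 1\ra^w)$ is supported on bidegrees $(p,q)$ with $p+q=n+1$ and $q\geq 1$, so $p\leq n$. Consequently both $H_{n+1}(Y^n;\bbL\la 1\ra^w)$ and $H_{n+1}(X^{n+1};\bbL\la 1\ra^w)$ have the same $E^2$-terms $H_p(-;L_q(\bZ))$; the ``fundamental class'' bidegree $(n+1,0)$ is absent from both because $\bbL\la 1\ra$ is $1$-connective. After arranging $c\colon Y\to B\pi$ to be sufficiently connected via $1$- and $2$-surgery on $Y$ (possible for $n\geq 5$), every class in $H_{n+1}(B\pi;\bbL\la 1\ra^w)$ can be realized as $c_*(\beta)$ for some closed $n$-manifold $Y$ with $\pi_1(Y)\cong\pi$, $w_1(Y)=w$, and $\beta\in H_{n+1}(Y;\bbL\la 1\ra^w)$.

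The main obstacle is this realization step: starting from a closed-manifold representative $(X^{n+1},c_X,\xi)$ of an arbitrary class $\alpha\in \mathrm{image}(A)$, the plan is to split $X=W_1\cup_Y W_2$ along a codimension-one two-sided submanifold $Y^n\hookrightarrow X$ with $\pi_1(Y)\xrightarrow{\approx}\pi_1(X)=\pi$ (obtainable by $1$-surgery for $n\geq 5$), to modify the normal invariant $\xi$ so that it becomes trivial on one piece $W_1$ by additivity of surgery obstructions, and to identify the remaining normal map $(W_2,\bd W_2)\to (Y\times I, Y\times \bd I)$ as an inertia representative of $\sigma$. The hypothesis $n\geq 5$ is essential throughout: for the $1$-surgery producing $Y$, for Wall realization of the intermediate obstructions, and for the topological $s$-cobordism theorem of \cite{kirby-siebenmann1} used to rectify the normal invariant on each side.
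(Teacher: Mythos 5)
Your reduction has a genuine gap at its core: you propose to prove the theorem by identifying \emph{both} $I_{n+1}(\pi,w)$ and $C_{n+1}(\pi,w)$ with the image of the assembly map on $H_{n+1}(\Kpi^w;\bbL_\bullet)$, citing ``Ranicki's realization of $\bbL_\bullet$-homology classes by closed-manifold normal invariants.'' That realization statement is exactly Ranicki's unproved assertion \cite[18.6(i)]{ranicki-assembly}, and it is \emph{false} in general: Example \ref{ko-example} (via Lemma \ref{ex: finite complex}, built from Johnson--Wilson $BP$-theory examples and the Hambleton--Hausmann variant of the Kan--Thurston theorem) produces a finitely-presented group for which $C_n(\pi,w) \subsetneq A_n(\pi,w)$, because away from $2$ there are classes in $\ko\co_*(\Kpi;\Zodd)\cong H_*(\Kpi;\bbL_\bullet)\otimes\Zodd$ that are not in the image of $\Omega^{SO}_*(\Kpi)\otimes\Zodd$, i.e.\ not representable by any closed-manifold surgery problem. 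Since $I_{n+1}\subseteq C_{n+1}\subseteq A_{n+1}$, neither subgroup equals $\mathrm{image}(A)$ integrally; the equality with the assembly image holds only after localizing at $2$ (Theorem \ref{thm:2-local}) or after periodic stabilization (Theorem B). So a proof that ``every class in $H_{n+1}(\Kpi^w;\bbL_\bullet)$ is realized by an inertial problem'' cannot exist, and your Atiyah--Hirzebruch/surgery argument for that realization must break down --- the obstruction is precisely manifold-representability of $\bbL_\bullet$-homology classes at odd primes, which connectivity of $c$ does not address.

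The paper's route is structured to avoid this. At $2$, where $\bbL_0$ splits into Eilenberg--MacLane pieces, it does show $I_{n+1}\otimes\Ztwo = A_{n+1}\otimes\Ztwo$, by realizing each summand of (\ref{decomposition}) through explicit inertial problems: glue Milnor or Kervaire problems fibrewise into a tubular neighbourhood of $Y\subset X\times I$, where $X$ is either $Y\times S^{n-m}$ or, when $w_1(Y)\neq g^*(w)$, a twisted sphere bundle whose gluing needs the Brieskorn involution of Lemma \ref{brieskorn} (a point your sketch does not touch). Away from $2$ it does \emph{not} pass through $\mathrm{image}(A)$: instead it starts from an arbitrary element of $C_{n+1}(\pi,w)\otimes\Zodd$, uses Lemma \ref{lem:positive} (the class $\Delta$ is a sum of \emph{positive}-degree cobordism classes, by Quillen's theorem) together with Poincar\'e duality for bordism to write the associated $\ko$-class as a $\Zodd$-combination of fundamental classes of manifolds $Y^{n+1-4k}\to\Kpi$ with $k>0$, and then realizes each such term by an inertial problem over $Y\times S^{4k-1}\times I$, inducting on the ``lower terms.'' Separately, your final splitting step ($X=W_1\cup_Y W_2$, pushing the normal invariant onto one side and reading $W_2\to Y\times I$ as an inertial problem) is unsubstantiated even locally: additivity gives relative obstructions over $(W_i,\bd W_i)$, not over $(Y\times I, Y\times\bd I)$, and there is no reason for $W_2$ to be a cylinder on $Y$. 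I recommend reworking the argument along the two-localization scheme above rather than through the assembly image.
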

As stated, this holds for the simple surgery  obstructions in $L^s_{n+1}(\Zpi,w)$, but the inertial or closed manifold subgroups of $L^h_{n+1}(\Zpi,w)$ are just the images of $I_{n+1}(\pi,w)$ or $C_{n+1}(\pi,w)$ under the natural change of $K$-theory homomorphism $L^s \to L^h$ (or $L^h \to L^p$). It follows that the inertial and closed manifold subgroups are equal for all torsion decorations in $K_i(\Zpi)$, $i \leq 1$.
In \cite{h3} it was proved that the images of these two subgroups were equal in the \emph{projective} surgery obstruction groups $L^p_{n+1}(\Zpi,w)$, for $\pi$ a finite group, and the question answered here was raised in \cite[p.~107]{h3}. 
\begin{remark}
Fairly complete information is available about the closed manifold obstructions for finite fundamental groups \cite[Theorem~A]{hmtw1}, under the assumptions that the manifolds are oriented and surgery obstructions are measured up to \emph{weakly simple} homotopy equivalence, with Whitehead torsion in $SK_1(\Zpi)$. The outstanding open problems in this area are (i) to investigate the non-oriented case, (ii) to compute the \emph{simple} closed manifold obstructions in $L_*^s$, and (iii) to decide whether the component 
$\kappa_4\colon H_4(\pi;\cy 2) \to L_6(\Zpi)$ of the assembly map $A_*\colon H_*(\Kpi^w;\bbL_\bullet) \to L_*(\Zpi,w)$ is zero or non-zero (see Section \ref{sec:one} and \cite[p.~352]{hmtw1} for this notation). 
\end{remark}

For  a finitely-presented group $\pi$, the closed manifold subgroup $C_n(\pi,w)$  is contained in the image $A_n(\pi,w)$ of the assembly map, but for $\pi$ of infinite order they are not always equal (see Example \ref{ko-example}).  However,  these subgroups do become equal after localizing at $2$ (see Theorem \ref{thm:2-local}), or after stabilizing as follows. The periodicity isomorphism
$$L_{n}(\Zpi,w) \xrightarrow{\times \CP^2} L_{n+4}(\Zpi,w)$$
allows us to identify $L_n \cong L_{n+4k}$, for all $k \geq 0$. We define the \emph{periodic} image of the assembly map  $\SA_{q}(\pi,w)$, $0 \leq q \leq 3$, as the subgroup of $L_q(\Zpi,w)$ generated by all of the images of the assembly maps  $A_{n}(\pi,w)$, for $n \equiv q \pmod 4$. 

Similarly, we define the \emph{periodic} inertial subgroup $\SI_{q}(\pi,w)$ and 
the  \emph{periodic} closed manifold subgroup $\SC_{q}(\pi,w)$, $0 \leq q \leq 3$, as the subgroups of $L_q(\Zpi,w)$ generated by all  $I_n(\pi,w)$ and $C_n(\pi,w)$, respectively, for $n \equiv q \pmod 4$.  
After stabilization we obtain a result for all fundamental groups.
\begin{thmb}
Let $\pi$ be a finitely-presented group and $w\colon \pi \to \cy 2$  an orientation character.  The periodic inertial subgroup $\SI_q(\pi,w)$ and the  periodic closed manifold subgroup $\SC_{q}(\pi,w)$ both equal the  periodic image of the assembly map $\SA_q(\pi,w)  \subseteq L_{q}(\Zpi),w)$, for $0\leq q\leq 3$.
\end{thmb}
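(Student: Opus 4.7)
My plan is to combine Theorem A with a standard fact about the assembly map, namely that the periodic $\bbL$-spectrum is obtained from its connective cover $\bbL\la 0\ra$ by inverting the Bott-type class $[\CP^2]\in L_4(\bZ)$. Theorem A gives $I_{n+1}(\pi,w)=C_{n+1}(\pi,w)$ for $n\ge 5$, so taking the subgroups of $L_q(\Zpi,w)$ generated by these over $n\equiv q\pmod 4$ yields $\SI_q(\pi,w)=\SC_q(\pi,w)$. Since $C_n\subseteq A_n$ for every admissible $n$, the containment $\SC_q(\pi,w)\subseteq\SA_q(\pi,w)$ is automatic; the substance of the theorem is therefore $\SA_q(\pi,w)\subseteq\SC_q(\pi,w)$.

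For this inclusion, after identifying $L$-groups in degrees $\equiv q\pmod 4$ via $\times\,\CP^2$, I would use the fact that the periodic assembly image is the union of connective assembly images in degrees $n+4k$:
\begin{equation*}
A_n(\pi,w)\;=\;\bigcup_{k\ge 0}\Image\!\bigl(H_{n+4k}(\Kpi^w;\bbL\la 0\ra)\to L_{n+4k}(\Zpi,w)\bigr).
\end{equation*}
Given $\alpha\in A_n(\pi,w)$ lifted from $\beta\in H_n(\Kpi^w;\bbL_\bullet)$, for some $k\ge 0$ the class $\beta\cdot[\CP^2]^k$ lifts further to $\tilde\beta\in H_{n+4k}(\Kpi^w;\bbL\la 0\ra)$ whose connective assembly equals (the periodicity translate of) $\alpha$.

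It remains to identify the connective assembly image with the Sullivan--Wall image $C_{n+4k}(\pi,w)$. By Sullivan's splitting of $G/TOP$ at $2$ and at odd primes, combined with the $\bbL\la 0\ra$-orientability of $TOP$ bundles, the forgetful map
\begin{equation*}
\Omega_{n+4k}(\Kpi\times G/TOP,\Kpi\times\ast)\to H_{n+4k}(\Kpi^w;\bbL\la 0\ra)
\end{equation*}
is surjective in dimensions $\geq 5$. Every connective $\bbL$-homology class is therefore represented by a triple $(M,c,\eta)$ with $M^{n+4k}$ a closed $TOP$ manifold, $c\colon M\to\Kpi$ inducing $w$, and $\eta\colon M\to G/TOP$ a normal invariant; its assembly is the surgery obstruction of the associated closed-manifold surgery problem, and hence lies in $C_{n+4k}(\pi,w)$. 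Under the periodicity isomorphism this places $\alpha$ in $\SC_q(\pi,w)$.

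The main obstacle I anticipate is verifying the forgetful surjection in the twisted case $w\not\equiv 1$: the geometric realization of $\bbL\la 0\ra$-homology classes on $\Kpi^w$ by closed-manifold normal invariants with the correct orientation character must be handled carefully, tracking $w$ through Sullivan's splittings of $G/TOP$ and the $TOP$ orientation of the $\bbL$-spectrum. The oriented case is classical, and the twisted version is implicit in Ranicki's algebraic surgery framework but deserves explicit attention; once it is in place, the proof reduces to the periodic-stabilization bookkeeping above.
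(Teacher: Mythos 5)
Your reduction of $\SI_q=\SC_q$ to Theorem A and the observation that $\SC_q\subseteq\SA_q$ is automatic are fine, but the step carrying all the weight --- the claim that the forgetful map $\Omega_{n+4k}(\Kpi\times G/TOP,\Kpi\times\ast)\to H_{n+4k}(\Kpi^w;\bbL\la 0\ra)$ is surjective in each fixed dimension $\geq 5$ --- is false at odd primes, and it is precisely the unproved assertion $C_n(\pi,w)=A_n(\pi,w)$ of \cite[18.6(i)]{ranicki-assembly} that this paper refutes. Away from $2$ the target is $\ko$-homology, $H_n(X^w;\bbL_\bullet)\otimes\Zodd\cong\ko\co_n(X^w;\Zodd)$, and Example \ref{ko-example} (built from Lemma \ref{ex: finite complex}, i.e.\ the Conner--Smith and Johnson--Wilson $BP$-theoretic examples, transported to a $\Kpi$ by the Kan--Thurston/plus-construction argument) exhibits classes in the kernel of the Hurewicz map that are \emph{not} in the image of $\Omega^{SO}_*(X)\otimes\Zodd$ (equivalently of $\Omega^{STOP}_*$, by Hodgkin--Snaith). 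So if your surjectivity claim were correct, Theorem B would be unnecessary: one would get $C_n=A_n$ dimension by dimension, contradicting the example. Sullivan's splittings and $\bbL$-orientability give the splitting of the \emph{target} of the assembly map, not representability of its homology classes by closed-manifold normal invariants; representability holds $2$-locally (Thom, Conner--Floyd, which is how Theorem \ref{thm:2-local} works), but fails integrally at odd primes in fixed dimensions. Your stabilization by $[\CP^2]^k$ is attached to the wrong step: lifting from the periodic to the connective $\bbL$-spectrum is essentially definitional for $\SA_q$, whereas the stabilization is genuinely needed \emph{after} that, to make a connective class representable by manifolds.

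The paper's route supplies exactly the missing ingredient. After Theorem \ref{thm:2-local} one only needs $\SA_q\otimes\Zodd\subseteq\SI_q\otimes\Zodd$. Given $\alpha\in\ko_m(\Kpi^w;\Zodd)$, pull it back to a finite skeleton $X\subset\Kpi$ with $\dim X<m$, where $\ko_m(X^w;\Zodd)=KO_m(X^w;\Zodd)$; then the Conner--Floyd isomorphism $\Omega^{SO}_{m+4\ast}(X)\otimes_{\Omega^{SO}_\ast(\pt)}\Zodd\cong KO_m(X^w;\Zodd)$ expresses $\alpha$, \emph{after} raising the dimension by multiples of $4$ depending on $\alpha$, as a finite $\Zodd$-combination of fundamental classes $[Y^{m+4k},g_k]$; gluing Milnor problems fibrewise into $Y\times S^{4l-1}\times I$ then produces inertial elements realizing $A_m(\alpha)$ up to periodicity. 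If you want to keep your formulation, you must replace the blanket surjectivity claim by this class-by-class stabilized representability statement (Conner--Floyd, or Conner--Smith's result that $t^na$ becomes representable for large $n$), and justify it; the twisted case $w\not\equiv 1$, which you flag as the main obstacle, is handled routinely by Thom spectra $\Kpi^w$ and manifolds with $g^*(w)=w_1(Y)$ and is not where the difficulty lies.
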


\begin{remark}
For infinite torsion-free groups, the $L$-theory assembly maps are conjectured to be isomorphisms \cite{farrell-jones1}, and this is currently an active area of research. For infinite groups $\pi$ with torsion, conjecturally the contribution to $C_n(\pi)$ arising from finite subgroups is determined by the virtually cyclic subgroups of $\pi$. 
Theorem B is proved by showing that  the periodic inertial subgroup $\SI_{q}(\pi,w)$ is equal to a periodic stabilization of the image of the assembly map (see Theorem \ref{thm:periodic}).
\end{remark}
\begin{acknowledgement}
The author would like to thank Matthias Kreck,  Larry Smith, Andrew Ranicki, and Larry Taylor for helpful conversations and correspondence. For this revision, I am also indebted to
Bob Bruner, John Greenlees, David Johnson, Peter Landweber, and Steve Wilson for a lot of useful  information about complex bordism and connective $K$-theory.

\end{acknowledgement}
\section{The surgery assembly map}\label{sec:one}
 We will need to use the relationship between the closed manifold subgroup and the image of the $L$-theory assembly map.
  Recall that there is a factorization due to Quinn \cite{quinn0} and Ranicki \cite{ra7}, \cite[\S 18]{ranicki-assembly} (see also Nicas \cite[\S 3]{nicas-memoir}):
 $$\xymatrix@C-15pt{ [X\times I, X \times \bd I; G/TOP, \ast] \ar[rrr]^(0.6){\sigma_{n+1}(X)}\ar[d]^{\cap [X, \bd X]_{\bbL^0}}&&&L_{n+1}(\Zpi,w)\\
 H_{n+1}(X^w; \bbL_\bullet) \ar[r]^{i_\bullet}&H_{n+1}(X^w; \bbL_0)\ar[rr]^{c_*}&&H_{n+1}(\Kpi^w; \bbL_0)\ar[u]_{\ A_{n+1}}
 }$$
of the surgery obstruction map $\sigma_{n+1}(X)$ through the assembly map $A_{n+1}$, where $\bbL_0$ denotes the $(-1)$-connective quadratic $L$-spectrum with  $\bbZ \times G/TOP$ in dimension zero, and $[X, \bd X]_{\bbL^0}$ is the fundamental class for  symmetric $\bbL^0$-theory. Cap product with this fundamental class gives a  Poincar\'e duality isomorphism \cite[18.3]{ranicki-assembly} for  $\bbL_0$, and for its $0$-connective cover $\bbL_{\bullet}$ (which has $G/TOP$ in dimension zero). In particular, 
$$H^0(X, \bd X; \bbL_{\bullet}) = [X\times I, X \times \bd I; G/TOP, \ast] \ \cong \ H_n(X^w;\bbL_{\bullet})\ .$$
The (co)fibration $i_\bullet\colon \bbL_{\bullet} \to \bbL_0 \to H\bZ$ of spectra induces a long exact sequence relating the two homology theories.
Similarly, we have the formula
$$\sigma_n(X) = A_n \circ c_*\circ i_\bullet\circ (-\, \cap\, [X]_{\bbL^0}).$$
 The notation $X^w$ or
$\Kpi^w$ means the Thom spectrum  of the line bundle over $X$ or $\Kpi$ induced by $w$, with Thom class in dimension zero, and the assembly maps $A_*$ are induced by a spectrum-level composite
$$A_{\pi,w}\colon \Kpi^w \wedge \bbL_0 \xrightarrow{a_{\pi,w}\wedge 1}
\bbL^0(\Zpi, w) \wedge \bbL_0 \longrightarrow \bbL_0(\Zpi,w)$$
as described in \cite[\S 1]{hmtw1}. 
In particular, the homomorphisms $A_n$, $n \geq 0$, are just the maps induced on homotopy groups by $A_{\pi,w}$. 
We define the subgroup
\eqncount
\begin{equation}\label{assembly_subgp}
A_{n}(\pi,w) = \Image\left (H_{n}(\Kpi^w; \bbL_\bullet) \xrightarrow{i_\bullet} H_{n}(\Kpi^w; \bbL_0) \xrightarrow{A_n} L_{n}(\Zpi,w)\right ) \end{equation}
as the image of the assembly map restricted to $\bbL_\bullet$, for any dimension $n\geq 0$.  Let $\SA_q(\pi,w)$ denote the image of the assembly map made \emph{periodic}. We observe that the factorization of the surgery obstruction map implies that
$$C_{n}(\pi,w) \subseteq A_{n}(\pi,w) \subseteq L_{n}(\Zpi,w),$$
and
$$I_{n+1}(\pi,w) \subseteq C_{n+1}(\pi,w)\subseteq A_{n+1}(\pi,w),$$
but the inertial subgroup and the closed manifold subgroup have purely geometric definitions independent of the assembly map.
\begin{remark}
In \cite[18.6(i)]{ranicki-assembly} it is stated without proof that $C_n(\pi,w) = A_n(\pi,w)$, for $n\geq 5$. This is not true in general (see Example \ref{ko-example}), but we will verify this for $\pi$ finite. For $\pi$ any finitely-presented group, we show that $C_n(\pi,w)\otimes \Ztwo = A_n(\pi,w)\otimes \Ztwo$, for $n\geq 5$, and that $\SC_q(\pi,w) = \SA_q(\pi,w)$, for $0\leq q \leq 3$. 
\end{remark}

\section{The characteristic class formulas}\label{sec:two}
We will use the characteristic class formulas for the surgery obstruction maps
$\sigma_*(X)$ as presented by Taylor and Williams \cite{taylor-williams1} (see also \cite{wall-1976}, and \cite[\S 1]{hmtw1} for the non-oriented case).  Let $\bo(\La)$ denote the connective $KO$-spectrum with coefficients in a group $\La$. The associated homology theory is called $\ko$-homology (with coefficients in $\La$). The Morgan-Sullivan characteristic class  \cite{morgan-sullivan1} is denoted 
$\cL \in H^{4*}(BSTOP; \Ztwo)$, and $V \in H^{2i}(BSTOP;\cy 2)$ denotes the total Wu class.

\begin{enumerate}\addtolength{\itemsep}{0.2\baselineskip}
\item (\cite[Theorem~A]{taylor-williams1}) The spectra $\bbL^0(\Zpi, w)$ and $\bbL_0(\Zpi,w)$ are generalized Eilenberg-MacLane spectra when localized at 2, and when localized away from 2 are both
$$\bo(\La_0)\vee \bo(\La_1)\vee \bo(\La_2)\vee \bo(\La_3),$$
 where  $\La_i = \pi_i(\bbL^0(\Zpi,w))\otimes \Zodd$. In particular,  there is an equivalence of spectra
$ \bbL^0\otimes \Zodd \simeq \bo(\Zodd)$,  defining a characteristic class
$$\Delta \in KO^0(\bbL_\bullet\,; \Zodd)$$ whose associated map
  $\sigma\colon G/TOP [1/2] \xrightarrow{\sim} BO[1/2]$ is the homotopy equivalence 
of infinite loop spaces due to Sullivan and Kirby-Siebenmann (see \cite{sullivan1}, \cite{madsen-snaith-tornehave1}, and the exposition in \cite[4.28]{madsen-milgram1}).
\item  The splitting of $\bbL_0 \otimes \Ztwo$ is given by universal cohomology classes
$\ell \in H^{4\bast}(\bbL_0\,; \Ztwo)$ and $k\in H^{4\bast+2}(\bbL_0\,;\cy 2)$. The domain of the assembly map 
$$H_n(\Kpi^w; \bbL_0)\otimes \Ztwo \xrightarrow{\approx}
\bigoplus_{i \geq 0} H_{n-4i}(\pi; \Ztwo^w) \oplus H_{n-4i - 2}(\pi; \cy 2)$$
has a natural splitting induced by $\ell$ and $k$. The assembly map has component maps 
$$\cI_m \colon H_{m}(\pi; \Ztwo^w) \to L_m(\Zpi,w)\otimes \Ztwo, \ m \geq 0$$
and
$$\kappa_m\colon  H_{m}(\pi; \cy 2) \to L_{m+2}(\Zpi,w)\otimes \Ztwo, \ m \geq 0$$
which determine $A_{n}(\pi,w)\otimes \Ztwo$ completely (see \cite[\S 1]{hmtw1}).

\item (\cite[Theorem~C]{taylor-williams1}) Let $X$ be a closed $n$-manifold, with a reference map $c\colon X \to \Kpi$ such that $c^*(w) = w_1(X)$, and let $\eta_X$  denote the orientation line bundle  over $X$. 
Let $u_X\colon X \to BSTOP$ classify the bundle $\nu_{+}$ such that $\nu_{+}\oplus \eta_X$  is the stable normal bundle $\nu_X$.  Let $f\colon X \to \bbL_\bullet$ determine a degree 1 normal map. Then 
$$\sigma_X(f)_{(odd)} = A_*c_*\left (f^*(\Delta) \cap [X]_{\bo} \right)$$
gives the surgery obstruction localized away from 2, where $[X]_{\bo} \in \ko_n(X^w)$ denotes the $\ko$-fundamental class of $X$.
Furthermore, 
 the $2$-local surgery obstruction is given by
$$\qquad \ \sigma_X(f)_{(2)} = A_*c_*\left ( (u_X^*(\cL) \cup f^*(\ell) + u_X^*(\cL) \cup f^*(k) + \delta^*(u_X^*(VSq^1V) \cup f^*(k))) \cap [X] \right )$$
where $\delta^*$ denotes the integral Bockstein and $A_*$ is the assembly map.
\end{enumerate}
These formulas translate the given information about the manifold $X$ and the surgery problem $f\colon X \to \bbL_\bullet$ into a collection of $\ko$-homology classes (away from 2), or a collection of ordinary $\Ztwo^w$ or $\cy 2$ homology classes for $(\pi,w)$. The surgery obstruction $\sigma_X(f)\in L_n(\Zpi,w)$ is then computed by applying the assembly map to these classes. There are similar formulas for the obstruction to  a relative surgery problem defined by
$f\colon \Sigma(X) \to \bbL_\bullet$, involving the relative fundamental class $[X\times I, X\times \bd I]$.
\begin{remark} Note that the degree 0 component of the class $f^*(\ell)$ in the $2$-local formula is zero. The class $f^*(\Delta)$ has a similar property which will be made precise in Lemma \ref{lem:positive}.
\end{remark}

\section{The proof of Theorem A (localized at $2$)}\label{sec:three}
We fix the fundamental group data $(\pi,w)$. The idea of the proof  (generalizing \cite[\S 4]{h3}) is to construct enough inertial surgery problems to realize all possible elements of $C_{n+1}(\pi,w)$. The target manifolds for these surgery problems will have the form
$X^n \times I$, where $X^n$ is the total space of an $S^{n-m}$-bundle over $Y^m$, with structural group $\cy 2$, and the dimension of $Y$ has the form $m = (n+1) -4i$ or $m = (n+1) - 4i+2$, for some $i >0$. By construction, the bundle $ X\to Y$ will have a section so we can embed $Y\subset X$. The surgery problems
$$(f,b)\colon (W, \bd W) \to (X\times I, X\times \bd I),$$
with $\deg f = 1$ and $b\colon \nu_W \to \nu_{X\times I}$ a bundle map covering $f$, will be constructed by glueing a simply-connected Milnor or Kervaire manifold surgery problem \emph{fibrewise} into a tubular neighbourhood of $Y \subset X \times \{1/2\} \subset X\times I$. The details of this construction will be given below.

The basic input is the relation between bordism and homology or $KO$-theory.
\begin{enumerate}
\item (localized at 2) By the work of Thom \cite{thom1} and Conner-Floyd \cite{conner-floyd1}, the Hurewicz map $\Omega^{SO}_m(X,A) \otimes \Ztwo \to H_m(X,A;\Ztwo)$ for oriented bordism is surjective for every  pair $(X,A)$. Similarly, the Hurewicz map
$\cN_m(X,A) \to H_m(X,A; \cy 2)$  for unoriented bordism is surjective, $m\geq 0$.

\item (localized away from 2) There is an isomorphism 
$$h_0\colon \Omega_{k +4*}^{SO}(X)\otimes_{\Omega_*^{SO}(\pt)} \Zodd\xrightarrow{\approx}
KO_k(X; \Zodd)$$
induced by the image of the $KO[1/2]$-fundamental class (see \cite[4.15]{madsen-milgram1}). In this tensor product,  the action $\Omega_*^{SO}(\pt) \to \Zodd$ is given by the index homomorphism  if $*=4i$, and zero if $* \neq 4i$.
\end{enumerate}
For finite groups, there is the following foundational result:

\begin{theorem}[{Wall \cite[\S 7]{wall-V}}]\label{Wall-injective} For $\pi$ a finite group, and $w$ an orientation character, the localization map $L_n(\Zpi, w) \to L_n(\Zpi, w)\otimes \Ztwo$ is injective.
\end{theorem}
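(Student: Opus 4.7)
The plan is to show that $L_n(\Zpi,w)$ has no odd torsion for $\pi$ finite, which is equivalent to injectivity of the localization-at-2 map (since the kernel of $L \to L\otimes \Ztwo$ consists precisely of elements annihilated by some odd integer). The strategy follows Wall's approach in \cite{wall-V}.

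First I would reduce to hyperelementary subgroups via Dress induction. The Dress induction theorem supplies a collection of induction maps from $L_n(\bZ H,w|_H)$ (summed over hyperelementary subgroups $H \leq \pi$) whose image generates $L_n(\Zpi,w)$ after tensoring with $\Zodd$, and in fact realizes it as a natural direct summand. This reduces the no-odd-torsion claim to the case of hyperelementary $\pi$.

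Second, for hyperelementary $\pi$, the plan is to factor the localization through the inclusion $\Zpi \hookrightarrow \bbR\pi$ and exploit the classical computation of $L$-groups of semisimple real algebras. By Wedderburn, $\bbR\pi$ splits as a product of matrix algebras over $\bbR$, $\bbC$, and $\bbH$, each carrying an involution determined by $w$ and complex conjugation. The corresponding decomposition
\[
L_n(\bbR\pi,w) \cong \bigoplus_{\chi} W_n(D_\chi, \bar{\ }),
\]
indexed by the irreducible real characters of $(\pi,w)$, writes $L_n(\bbR\pi,w)$ as a sum of Witt groups of division algebras over $\bbR$. Each summand is free abelian (detected by multisignature) in the appropriate degrees and zero otherwise, so $L_n(\bbR\pi,w)$ is torsion-free, hence has trivial odd torsion.

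The main obstacle, and the technical heart of Wall's argument, is proving that
\[
L_n(\Zpi,w)\otimes \Zodd \longrightarrow L_n(\bbR\pi,w)\otimes \Zodd
\]
is injective on hyperelementary $\pi$. The idea is that hermitian forms over $\Zpi$, after inverting $2$, are detected by their image in the representation theory of $\pi$: one combines the fact that $\bZ[1/2]\pi$ is a maximal order (or close enough, modulo controllable local obstructions at primes dividing $|\pi|$) with a careful analysis of the cokernel between the $L$-theory of $\Zpi$ and that of a maximal order containing it. For hyperelementary groups at an odd prime $q$, the $q$-primary obstructions vanish by a direct Mayer--Vietoris / Rim-square computation of the form
\[
L_n(\Zpi,w)\otimes \Zodd \isomto L_n(\cO_\pi,w)\otimes \Zodd,
\]
where $\cO_\pi$ is a maximal order, and the latter is computed via its Wedderburn decomposition over $\bbQ$. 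Once this injectivity is established, the torsion-freeness of $L_n(\bbR\pi,w)$ completes the argument. In practice I would cite Wall's paper for these detailed computations rather than reproduce them.
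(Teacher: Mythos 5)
The first thing to note is that the paper does not prove this statement at all: it is quoted from Wall \cite{wall-V}, \S 7, where the relevant content is that the $L$-groups of an order in a semisimple $\bbQ$-algebra are finitely generated with 2-primary torsion, proved by the localization--completion (arithmetic square) analysis of the earlier papers in that series. So the real comparison is with Wall's argument, and two remarks are in order about your reconstruction. First, Wall's proof does not pass through Dress induction (which postdates \cite{wall-V}); he works directly with an arbitrary order, so the hyperelementary reduction, while legitimate (computability gives detection by restriction to $q$-hyperelementary subgroups after localizing at an odd prime $q$), is not needed for this qualitative statement. Second, and more seriously, the step you yourself identify as the technical heart --- injectivity of $L_n(\Zpi,w)\otimes \Zodd \to L_n(\bbR\pi,w)\otimes \Zodd$ --- together with the 2-primariness of the torsion of the target already contains the theorem (it immediately gives the vanishing of odd torsion), and your justification of it consists of assertions that you then propose to source from Wall. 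As a standalone proof this is circular: the Dress-induction reduction and the maximal-order discussion are scaffolding around a citation that is doing all the work, which is exactly the posture the paper itself takes, only with more machinery.

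Two specific claims in the sketch also need repair. The ring $\bZ[1/2]\pi$ is not a maximal order unless $|\pi|$ is a power of $2$; maximality fails at odd primes dividing $|\pi|$, so the comparison with a maximal order must go through conductor squares and localization sequences, whose relative terms are $L$-groups of finite rings and Tate cohomology of $K$-groups --- these are 2-primary, which is the actual reason the comparison is an isomorphism away from $2$, but ``Rim-square'' arguments in the form you state them exist only in special cases (e.g.\ cyclic groups of prime order), and excision in quadratic $L$-theory with decorations needs care. Likewise $L_n(\bbR\pi,w)$ need not be torsion-free: simple factors $M_k(\bbC)$ with $\bbC$-linear involution contribute $\cy 2$ summands. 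Its torsion is 2-primary, which is all you need, but the statement ``detected by multisignature, hence torsion-free'' should be corrected accordingly.
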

We now divide the argument into two cases, since it suffices to show that $I_{n+1}(\pi,w)$ and $C_{n+1}(\pi,w)$ are equal after tensoring with $\Ztwo$ and $\Zodd$ separately. 

\begin{theorem}\label{thm:2-local}
Let $\pi$ be a finitely-presented group, $n\geq 5$, and $w\colon \pi \to \cy 2$  an orientation character. Then
\begin{enumerate}
\item $C_n(\pi,w)\otimes \Ztwo = A_n(\pi,w)\otimes \Ztwo$, and
\item $I_{n+1}(\pi,w)\otimes \Ztwo= A_{n+1}(\pi,w)\otimes \Ztwo$.
\end{enumerate}
\end{theorem}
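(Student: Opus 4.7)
The plan is to exploit the $2$-local splitting of the assembly map from Section~\ref{sec:two} to reduce the theorem to realizing the images of the component maps $\cI_m$ and $\kappa_m$ by explicit fibrewise Milnor or Kervaire plumbing constructions over bordism representatives of the input classes. By the Taylor-Williams decomposition, for any $N\geq 5$ the group $A_N(\pi,w)\otimes\Ztwo$ is generated by the images of $\cI_m\colon H_m(\pi;\Ztwo^w)\to L_m(\Zpi,w)\otimes\Ztwo$ for $m = N - 4i$, $i\geq 1$, and $\kappa_m\colon H_m(\pi;\cy 2)\to L_{m+2}(\Zpi,w)\otimes\Ztwo$ for $m + 2 = N - 4i$, $i\geq 0$; the $i = 0$ component of $\cI$ at dimension $N$ is suppressed by restriction from $\bbL_0$ to $\bbL_\bullet$, as noted at the end of Section~\ref{sec:two}. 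It thus suffices to realize each $\cI_m(y)$ and $\kappa_m(y)$ as a closed manifold surgery obstruction (with $N = n$) for part (i), and as an inertial surgery obstruction (with $N = n + 1$) for part (ii).

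To realize $\cI_m(y)$: apply the $2$-local surjection of oriented bordism (Thom, Conner-Floyd) to the Thom spectrum $\Kpi^w$ to obtain a closed $m$-manifold $(Y^m, c_Y)$ with $w_1(Y) = c_Y^*w$ and $(c_Y)_*[Y]\otimes 1 = y$. Take $X^n$ to be the total space of the trivial bundle $Y \times S^{n-m}\to Y$ for part (i), or $Y \times S^{n-m-1}\to Y$ for part (ii), so that the normal bundle of the section $s\colon Y\hookrightarrow X$ (respectively $s\colon Y\hookrightarrow X\times\{1/2\}\subset X\times I$) is $4i$-dimensional. In either case $w_1(X) = c_X^*w$ with $c_X = c_Y\circ p$. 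Choose a fibrewise disc neighborhood $Y\times D^{4i}$ of $s(Y)$ and replace it by $Y\times M^{4i}$, where $M^{4i}$ is the topologically capped Milnor $E_8$-plumbing, glued along their common boundary $Y\times S^{4i-1}$. This produces a closed (respectively relative) degree-one normal map fibrewise modelled on the signature-$8$ generator $\phi\colon M^{4i}\to S^{4i}$ of $L_{4i}(\bZ)$. By the Taylor-Williams formula of Section~\ref{sec:two}(iii), its surgery obstruction is a $\Ztwo^\times$-multiple of $\cI_m(y)$ after assembly through $(c_X)_*$.

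To realize $\kappa_m(y)$: use the unoriented bordism surjection $\cN_m(\Kpi)\to H_m(\pi;\cy 2)$ to get an unoriented $(Y^m, c_Y)$ representing $y$. To arrange $w_1(X) = c_X^*w$ even when $w_1(Y)\neq c_Y^*w$, take $X = \widetilde{Y}\times_{\cy 2}S^{n-m}$ (or $\widetilde{Y}\times_{\cy 2}S^{n-m-1}$ for part (ii)), where $\widetilde{Y}\to Y$ is the double cover classified by $c_Y^*w - w_1(Y)$ and $\cy 2$ acts on the sphere fibre by an orientation-reversing involution with a fixed section. Replace the Milnor plumbing by the Kervaire plumbing of dimension $4i + 2$ and perform the analogous fibrewise surgery in a tubular neighborhood of the section; the Taylor-Williams formula identifies the resulting obstruction with $\kappa_m(y)$. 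In part (ii) the fibrewise surgery is confined to a neighborhood of $Y\times\{1/2\}\subset X\times I$, leaving $X\times\bd I$ fixed by homeomorphism, so the resulting degree-one normal map $(W,\bd W)\to(X\times I,X\times\bd I)$ is inertial by definition.

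The main obstacle is the characteristic class verification. One must show that the fibrewise Milnor and Kervaire constructions pull back the universal classes $\ell\in H^{4*}(\bbL_0;\Ztwo)$ and $k\in H^{4*+2}(\bbL_0;\cy 2)$ to classes whose cap product with $[X]$ (or with the relative fundamental class $[X\times I,X\times\bd I]_{\bbL^0}$) and subsequent assembly through $(c_X)_*$ yield the prescribed $\cI_m(y)$ or $\kappa_m(y)$, up to $\Ztwo$-units. This factors through the known normalization of $\ell$ and $k$ on the Milnor and Kervaire generators of $L_{4i}(\bZ)$ and $L_{4i+2}(\bZ)$, together with multiplicativity of the assembly and of the Morgan-Sullivan class $\cL$ under the sphere-bundle projection $X\to Y$. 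The most delicate bookkeeping arises in the non-oriented $\kappa$ case: the $\cy 2$-twist must be chosen so that the orientation line bundle $\eta_X$ pulls back compatibly with $w$, ensuring that the Wu-class correction term $\delta^*(u_X^*(VSq^1V)\cup f^*(k))$ in the Taylor-Williams formula does not contaminate the identification of the primary term with $\kappa_m(y)$.
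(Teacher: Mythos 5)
Your overall strategy is the same as the paper's: decompose $A_{N}(\pi,w)\otimes\Ztwo$ via the Taylor--Williams splitting into the images of $\cI_m$ and $\kappa_m$, represent the input homology classes by (possibly twisted) bordism classes $g\colon Y^m\to \Kpi$, and realize each leading term by gluing a Milnor or Kervaire problem fibrewise into a tubular neighbourhood of $Y\subset X\times\{1/2\}\subset X\times I$. The Milnor ($\cI_m$) case and the Kervaire case with $w_1(Y)=g^*(w)$ are fine and match the paper (modulo the small point that the obstruction is $\cI_m(y)$ \emph{plus lower filtration terms}, not a unit multiple of it; one then generates $A_{N}\otimes\Ztwo$ by downward induction on $m$, which the paper makes explicit and you should too).

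The genuine gap is in the twisted $\kappa_m$ case, where $w_1(Y)\neq g^*(w)$. There your tubular neighbourhood of the section is not a product $Y\times D^{4i-2}$ but the twisted disc bundle $\widetilde Y\times_{\cy 2}D^{4i-2}$, with boundary $\widetilde Y\times_{\cy 2}S^{4i-3}$, so ``perform the analogous fibrewise surgery'' requires two things you never supply: a $\cy 2$-action on the Kervaire manifold $(K^{4i-2},\bd K)$ extending an involution on $\bd K^{4i-2}$, and a $\cy 2$-equivariant homeomorphism of $\bd K^{4i-2}$ with the linear sphere carrying the specific involution $S^{4i-3}(2i-1)$, so that $\widetilde Y\times_{\cy 2}K^{4i-2}$ can actually be glued along $\bd U$. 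This is precisely where the paper works: it chooses the bundle $\xi=(2i-1)\zeta\oplus(2i-1)\varepsilon$ so that the normal data of the section match the fixed-point structure of complex conjugation $T_d$ on the Brieskorn sphere $W^{4i-3}(d)$, and Lemma \ref{brieskorn} (via Kitada's normal cobordism plus the triviality of the $L_{4i-2}(\bZ[\cy 2],w)$-action in the topological category) shows $(W^{4i-3}(d),T_d)$ is equivariantly homeomorphic to $S^{4i-3}(2i-1)$; the conjugation involution then extends over the perturbed zero set, which is the Kervaire manifold. Your ``orientation-reversing involution with a fixed section'' on the fibre sphere is not enough: an arbitrary such involution need not be realized on $\bd K^{4i-2}$ compatibly with an involution on $K^{4i-2}$, and without that equivariant identification the twisted fibrewise construction does not exist. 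By contrast, your worry about the Wu-class term $\delta^*(u_X^*(VSq^1V)\cup f^*(k))$ is harmless: it only contributes lower terms, which the induction absorbs.
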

\begin{corollary}
If $\pi$ is a finite group, then $C_n(\pi,w) = A_n(\pi,w)$ and $I_{n+1}(\pi,w)= A_{n+1}(\pi,w)$, for all $n\geq 5$.
\end{corollary}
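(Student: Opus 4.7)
My plan is to derive the two integral equalities in the Corollary directly from the $2$-local equalities in Theorem \ref{thm:2-local} together with Wall's injectivity (Theorem \ref{Wall-injective}). The arguments for $C_n(\pi,w) = A_n(\pi,w)$ and $I_{n+1}(\pi,w) = A_{n+1}(\pi,w)$ are identical modulo interchanging parts (1) and (2) of Theorem \ref{thm:2-local}, so I only describe the first.

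The key step is to upgrade the $2$-local equality to an odd-local one. Given any $a \in A_n(\pi,w)$, Theorem \ref{thm:2-local}(1) produces $c \in C_n(\pi,w)$ and an odd integer $K$ for which $Ka = c$ holds in $L_n(\Zpi,w) \otimes \Ztwo$. Since $\pi$ is finite, Theorem \ref{Wall-injective} shows that $L_n(\Zpi,w) \hookrightarrow L_n(\Zpi,w) \otimes \Ztwo$, so this equality already holds integrally in $L_n(\Zpi,w)$; in particular, some odd multiple of every $a \in A_n(\pi,w)$ lies in $C_n(\pi,w)$. Reading the same relation inside $L_n(\Zpi,w) \otimes \Zodd$ yields $a \otimes 1 = c \otimes (1/K)$, which sits in the image of $C_n(\pi,w) \otimes \Zodd$. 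Hence $A_n(\pi,w) \otimes \Zodd \subseteq C_n(\pi,w) \otimes \Zodd$, and the opposite inclusion is automatic.

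To conclude, the quotient $A_n(\pi,w)/C_n(\pi,w)$ vanishes after tensoring with $\Ztwo$ and also after tensoring with $\Zodd$ (both localizations being exact). Any element of it is therefore annihilated by some odd integer and by some power of $2$, and coprimality forces that element to be zero. This gives $A_n(\pi,w) = C_n(\pi,w)$, and the parallel argument using Theorem \ref{thm:2-local}(2) gives $I_{n+1}(\pi,w) = A_{n+1}(\pi,w)$. No substantial obstacle arises: the whole deduction is formal once Theorem \ref{thm:2-local} and Theorem \ref{Wall-injective} are available, the essential mechanism being that Wall's injectivity converts the $2$-local comparison into both an integral and an odd-local one.
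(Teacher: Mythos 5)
Your deduction breaks at the step that upgrades the $2$-local equality to an odd-local one. From $A_n(\pi,w)\otimes \Ztwo = C_n(\pi,w)\otimes \Ztwo$ and Theorem \ref{Wall-injective} you correctly conclude that for each $a \in A_n(\pi,w)$ there exist an odd integer $K$ and $c \in C_n(\pi,w)$ with $Ka = c$ integrally in $L_n(\Zpi,w)$. But the next assertion --- that in $L_n(\Zpi,w)\otimes \Zodd$ this reads $a\otimes 1 = c\otimes (1/K)$ and hence lies in the image of $C_n(\pi,w)\otimes \Zodd$ --- is not valid: here $\Zodd = \bZ[1/2]$, and $1/K$ does \emph{not} belong to $\bZ[1/2]$ when $K$ is odd and $\neq \pm 1$. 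Knowing that every element of $A_n(\pi,w)$ has an odd multiple in $C_n(\pi,w)$ is exactly equivalent to the $2$-local equality you started from, and gives no information after inverting $2$. A toy example shows the formal scheme cannot work: with $C = 3\bZ \subset A = \bZ \subset L = \bZ$ one has $L \hookrightarrow L\otimes \Ztwo$, $A\otimes\Ztwo = C\otimes\Ztwo$, and every element of $A$ has an odd multiple in $C$, yet $A\otimes\Zodd \neq C\otimes\Zodd$ and $A \neq C$. Wall's injectivity only says that $L_n(\Zpi,w)$ has no odd torsion; this does dispose of the torsion elements of $A_n(\pi,w)$ (an element of $2$-power order with an odd multiple in $C_n(\pi,w)$ lies in $C_n(\pi,w)$, since the odd factor is invertible modulo that $2$-power), but it says nothing about elements of infinite order, which is where your coprimality argument silently fails.

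This is exactly the point where the paper inserts an additional input: by \cite[13B.1]{wall-book}, for $\pi$ finite the only elements of infinite order in $A_n(\pi,w)$ come from the trivial group, i.e.\ from the simply-connected surgery obstructions, and these are already realized by closed-manifold, respectively inertial, surgery problems. Combining this control of the infinite-order part with the $2$-local Theorem \ref{thm:2-local} and Theorem \ref{Wall-injective} is what yields the integral equalities $C_n(\pi,w) = A_n(\pi,w)$ and $I_{n+1}(\pi,w) = A_{n+1}(\pi,w)$. Some such rational (or odd-primary) information is indispensable; it cannot be extracted formally from the $2$-local statement, which is why your proposal as written has a genuine gap rather than a repairable slip.
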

\begin{proof}
For finite groups, the only elements of infinite order in $A_n(\pi,w)$ come from the trivial group (see \cite[13B.1]{wall-book}). Hence Theorem A for finite groups follows from the $2$-local version and Theorem \ref{Wall-injective}.
\end{proof}
\begin{proof}[The proof of Theorem \ref{thm:2-local}] We first show that $I_{n+1}(\pi,w)\otimes \Ztwo= A_{n+1}(\pi,w)\otimes \Ztwo$, and note that $(ii) \Rightarrow (i)$ for $n+1\geq 6$.  Alternately, a direct proof  that $C_n(\pi,w)\otimes \Ztwo = A_n(\pi,w)\otimes \Ztwo$, for all $n\geq 5$, can be given along the same lines. The details are similar, but easier, and will be left to the reader. 

 We proceed as outlined above to construct enough inertial elements to generate the domain 
 \eqncount
 \begin{equation}\label{decomposition}
 H_{n+1}(\Kpi^w; \bbL_\bullet)\otimes \Ztwo \xrightarrow{\approx}
\bigoplus_{i > 0} H_{n+1-4i}(\pi; \Ztwo^w) \oplus H_{n+1-4i + 2}(\pi; \cy 2)
\end{equation}
 of the assembly map restricted to $\bbL_\bullet$.
 
Suppose that $\alpha \in H_m(\pi; \Ztwo^w)$ is a given homology class (with twisted coefficients given by the orientation character $w$). Let $\eta$ denote the line bundle over $K(\pi, 1)$ with $w_1(\eta) = w$. By the Thom isomorphism, 
$$\Phi\colon H_m(\pi; \Ztwo^w) \cong H_{m+1}(E, \bd E; \Ztwo)$$
where $E =E(\eta)$ denotes the total space of the disk bundle of $\eta$. Let $h \colon (V^{m+1}, \bd V) \to (E, \bd E)$ be an oriented $(m+1)$-manifold, with reference map to $(E, \bd E)$, whose fundamental class $h_*[V, \bd V] = \Phi(\alpha)$. Now let 
$g\colon Y^m \to \Kpi$ be the transverse  pre-image of the zero section in $E(\eta)$, with $w_1(Y) = g^*(w)$. By construction, $g_*[Y] = \alpha$.

The model surgery problems with target $X \times I$ will be constructed from products
$X = Y^m \times S^{n-m}$. A small tubular neighbourhood of $Y \subset X \times \{1/2\} \subset X \times I$ is homeomorphic to the product $U = Y \times D^{n+1-m}$. By our choice of dimensions, $n+1-m = 4i$,
 for some $i >0$. Let $\varphi\colon (M^{4i}, \bd M) \to (D^{4i}, \bd D^{4i})$
  denote the simply-connected Milnor 
  manifold surgery problems, whose surgery obstructions represent generators of $L_{4i}(\bZ)$.
   Recall that these are smooth surgery problems, with boundary manifolds $\bd M^{4i}$
    smooth homotopy spheres (at least if $4i > 4$), but homeomorphic to the standard sphere by the solution of the Poincar\'e conjecture \cite{smale1}. In dimension $4$, we need the $E_8$-manifold constructed by Freedman \cite{freedman1}.

Now we define $W$ by removing the interior of $U$ from  $X\times I$, and glueing in $Y$ product with  
the Milnor manifold $M^{4i}$.
 The degree 1 map $f\colon W \to X \times I$ is the identity outside of $U = Y \times D^{n+1-m}$, and inside $U$ is given by $\id_Y \times \varphi$.
  Similarly, the bundle map $b\colon \nu_W \to \nu_{X\times I}$ is the identity over the complement of $U$ and given by the simply-connected problem over $U$. We now have a degree 1 normal map $(f,b)\colon W \to X\times I  $ which is the identity on the boundary, hence defines an element  in $ [\Sigma(X), G/TOP]$.

It follows from the characteristic class formula that the surgery obstruction
$$\sigma(f,b)_{(2)} = A_m(\alpha) + {\rm \ lower\  terms}$$
where $A_m = \cI_m$ or $A_m = \kappa_m$, and the ``lower terms" are the images under $A_j$ for $j <m$ (in this formula we have identified $L_{n-4i}= L_n$ by periodicity).

The surgery problems constructed so far are enough to deal with degree $m=(n+1-4i)$ contributions to  $H_{n+1}(\Kpi^w;\bbL_\bullet)$ from the first of the summands in formula (\ref{decomposition}). To realize the $\cy 2$-homology classes $\beta \in H_m(\pi; \cy 2)$ arising from the second summand, we start with a possibly non-orientable manifold $g\colon Y^m \to \Kpi$ with  $g_*[Y] = \beta$. In this case, $n+1 -m = 4i-2$, for some $i >0$.

Sometimes $Y$  can be chosen so that $w_1(Y) = g^*(w)$, and then we  proceed again as above.
The model surgery problems $W$ with target $X \times I$ are constructed from products
$X = Y^m \times S^{n-m}$, by removing a small tubular neighbourhood $U = Y \times D^{n+1-m}$ of $Y \subset X \times \{1/2\} \subset X \times I$. This time we  glue in the product 
$$\id_Y \times \psi\colon Y \times(K^{4i-2}, \bd K) \to Y \times (D^{4i-2}, \bd D^{4i-2})$$
of $Y$ 
with the simply-connected Kervaire surgery problem, whose surgery obstruction represents the  generator of  $L_{4i-2}(\bZ)$. The boundary $\bd K^{4i-2}$
is a smooth homotopy sphere, which is again homeomorphic to the standard sphere, so we may extend by the identity on the complement of $U$.

However, in general we may not have $w_1(Y) = g^*(w)$, and $X$ will be the total space of a certain non-trivial $S^{n-m}$-bundle over $Y$ with structural group $\cy 2$, which we now construct. 

Let $\zeta$ denote the line bundle over $Y$ with $w_1(\zeta) = w_1(Y) + g^*(w)$. Let $\xi = (2i-1)\zeta \oplus (2i-1)\varepsilon$ be the Whitney sum of $(2i-1)$ copies of $\zeta$, together with $(2i-1)$ copies of the trivial line bundle $\varepsilon$. Now let $p\colon X \to Y$ denote the total space of the associated sphere bundle $X = S(\xi)$, and observe that the class $w_1(X) = p^*(g^*(w))$. The fibre sphere has dimension $4i-3 = n-m$. Notice that the bundle $\xi$ has structural group $\cy 2$, and the transition functions defining this bundle operate through the involution denoted
$S^{4i-3}(2i-1)$, meaning the restriction to the unit sphere of the representation
$\bbR^{2i-1}_{+} \oplus \bbR^{2i-1}_{-}$ in which a generator of $\cy 2$ acts as $+1$ on the first subspace and as $-1$ on the second. Since $i >0$ this bundle has non-zero sections, so we may choose an embedding of $Y \subset X$.

We will now show that the Kervaire
sphere $\bd K^{4i-2}$ admits an orientation-reversing involution which is $\cy 2$-equivariantly homeomorphic to $S^{4i-3}(2i-1)$. Recall that $W^{4i-3}(d)$ denotes the Brieskorn variety given by intersecting the solution set of the equation
$$z_0^d + z_1^2 +\dots + z_{2i-1}^2 = 0$$
with the unit sphere in $\bbC^{2i}$, with $d \geq 0$ an odd integer. There is an involution $T_{d}$ on $W^{4i-3}(d)$ given by complex conjugation $z_j \mapsto \bar z_j$ in each coordinate. It is known that $W^{4i-3}(d)$ is a homotopy sphere if $d$ is odd, which is diffeomorphic to the standard sphere  if $d\equiv \pm 1 \pmod 8$, and to the Kervaire sphere $\bd K^{4i-2}$ if $d\equiv \pm 3 \pmod 8$ (see \cite{hirzebruch-mayer1}). 
 The complex conjugation involution extends to the perturbed zero set, which is diffeomorphic to the Kervaire manifold $K^{4i-2}$ if 
$d\equiv \pm 3  \pmod 8$.

\begin{lemma}\label{brieskorn}
The involution $(W^{4i-3}(d), T_d)$, with $d$ odd,  is $\cy 2$-equivariantly homeomorphic to $S^{4i-3}(2i-1)$.
\end{lemma}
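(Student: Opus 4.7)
The plan is to compare $(W^{4i-3}(d), T_d)$ and $(S^{4i-3}(2i-1), \sigma)$ by matching their fixed sets, their equivariant normal bundles, and the complements of invariant tubular neighborhoods; gluing these equivariant pieces yields the desired $\cy 2$-equivariant homeomorphism.

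First, a point $z \in W^{4i-3}(d)$ is fixed by $T_d$ if and only if each $z_j$ is real. Writing $z_j = x_j \in \bbR$, the equation $x_0^d + \sum_{j \ge 1} x_j^2 = 0$ combined with $\sum_{j \ge 0} x_j^2 = 1$ yields $x_0^2 - x_0^d = 1$; since $d$ is odd and $x_0^d \le 0$, this forces $x_0 = -s$ for the unique $s \in (0,1)$ with $s^2 + s^d = 1$, while $(x_1,\dots,x_{2i-1})$ ranges over a sphere of radius $\sqrt{1-s^2}$ in $\bbR^{2i-1}$. Hence the fixed set is a standard $(2i-2)$-sphere, matching the fixed set $S^{2i-2} \subset \bbR^{2i-1}_+$ of the standard involution on $S^{4i-3}(2i-1)$.

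Second, the $\bbC$-linear $O(2i-1)$-action on $(z_1,\dots,z_{2i-1})$ commutes with $T_d$ and acts transitively on the fixed sphere with point stabilizer $O(2i-2)$. The equivariant normal bundle consists of the imaginary tangent directions to $W^{4i-3}(d)$ at the fixed points (with fiberwise $\cy 2$-action $-1$), so it is determined by its $O(2i-2)$-representation in the fiber; a direct computation of $\ker(df_p)$ at $p = (-s,\sqrt{1-s^2},0,\dots,0)$ shows that this fiber decomposes as a trivial line plus the standard $(2i-2)$-dimensional $O(2i-2)$-representation, matching the $O(2i-2)$-decomposition of $\bbR^{2i-1}_-$ at the corresponding fixed point of $S^{4i-3}(2i-1)$. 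By the equivariant tubular neighborhood theorem, small $\cy 2$-invariant neighborhoods of the two fixed spheres are therefore $\cy 2$-equivariantly homeomorphic to $S^{2i-2} \times D^{2i-1}$ with $\cy 2$ acting as $-1$ on the fiber $D^{2i-1}$.

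Third, since $W^{4i-3}(d) \cong S^{4i-3}$ topologically (Hirzebruch--Mayer) and the fixed $S^{2i-2}$ is a locally flat codimension-$(2i-1)$ submanifold, the Stallings--Zeeman unknotting theorem (applicable in codimension $\ge 3$, i.e., for $i \ge 2$) identifies the complement of the tubular neighborhood with $D^{2i-1} \times S^{2i-2}$. The free $\cy 2$-action on this complement extends the antipodal action on the boundary $S^{2i-2} \times S^{2i-2}$; I match it with the standard model (trivial on $D^{2i-1}$, antipodal on $S^{2i-2}$) by passing to quotients, obtaining a compact manifold with $\pi_1 = \cy 2$ and the homotopy type of $\bbR P^{2i-2}$ rel boundary $S^{2i-2} \times \bbR P^{2i-2}$, and then applying the TOP $s$-cobordism theorem: since $\text{Wh}(\cy 2) = 0$, any $h$-cobordism rel boundary between this quotient and the standard model $D^{2i-1} \times \bbR P^{2i-2}$ is a product. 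Gluing the equivariant homeomorphisms of neighborhoods and complements along their common boundary yields the $\cy 2$-equivariant homeomorphism $W^{4i-3}(d) \cong S^{4i-3}(2i-1)$; the case $i=1$ is handled by direct inspection of $S^1$ with its two-fixed-point involution. The main obstacle is this final equivariant identification of the complement: while the $s$-cobordism argument is clean in principle, verifying the required $h$-cobordism structure and the vanishing of associated surgery obstructions is the technical heart of the proof. An alternative route is to invoke the classification of cohomogeneity-two $O(2i-1) \times \cy 2$-manifolds due to Hirzebruch--Mayer, noting that both $W^{4i-3}(d)$ and $S^{4i-3}(2i-1)$ carry matching $O(2i-1) \times \cy 2$-structures with identical orbit data.
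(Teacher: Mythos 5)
Your reduction into fixed set, equivariant normal bundle, and complement is reasonable, and the first two steps are essentially right: the fixed set of $T_d$ is a round $S^{2i-2}$ in the real locus, and its normal bundle is $\cy2$-equivariantly a trivial rank-$(2i-1)$ bundle with fibrewise action $-1$ (triviality because it is $TS^{2i-2}\oplus\varepsilon$). But the step you yourself flag as ``the technical heart'' --- identifying the free involution on the complement of the invariant tubular neighbourhood with the linear model --- is precisely where the content of the lemma lies, and your proposal does not supply it. There is no given $h$-cobordism rel boundary between the quotient of the complement and $D^{2i-1}\times\bbR P^{2i-2}$; producing one is equivalent to showing that the quotient, with its boundary identification, represents the trivial element of the rel-boundary topological structure set of $D^{2i-1}\times\bbR P^{2i-2}$. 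That requires (a) constructing a homotopy equivalence rel boundary, (b) controlling its normal invariant, and (c) dealing with the action of $L_{4i-2}(\bZ[\cy 2],w)\cong\cy 2$ on that structure set. Quoting $\mathrm{Wh}(\cy 2)=0$ and the TOP $s$-cobordism theorem does not touch (a)--(c). Moreover, no ``soft'' argument of this kind can work, because the corresponding smooth statement is false in general: Kitada's classification shows the involutions $(W^{4i-3}(d),T_d)$ for odd $d$ need not be equivariantly \emph{diffeomorphic} to the linear model, even though fixed sets and equivariant normal data agree. Any correct proof must use something specific to the topological category at exactly the point you leave open.

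The paper's proof makes this explicit: following Kitada, $(W^{4i-3}(d),T_d)$ is $\cy2$-equivariantly normally cobordant to $S^{4i-3}(2i-1)$ by a normal cobordism which is the identity near the fixed set, and the only remaining obstruction to promoting this to an equivariant $s$-cobordism is the action of $L_{4i-2}(\bZ[\cy 2],w)\cong\cy 2$ on the relative structure set of the complement of the fixed set. In the smooth category this action is hard to determine (and is responsible for the smooth non-uniqueness), but in the topological category it is trivial because the relevant element lies in the image of the assembly map --- this is the key point your argument is missing. Your proposed alternative via a classification of $O(2i-1)\times\cy 2$-manifolds with ``identical orbit data'' cannot rescue the argument either: such a classification in the smooth setting would distinguish the various $W^{4i-3}(d)$ rather than identify them, again because the smooth equivariant classification genuinely depends on $d$.
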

\begin{proof}
These involutions were studied by Kitada \cite{kitada1}, who gave necessary and sufficient conditions for 
$(W^{4i-3}(d), T_d)$ to be $\cy 2$-equivariantly diffeomorphic to $(W^{4i-3}(d'), T_{d'})$. We need only the easy part of his argument, namely that $(W^{4i-3}(d), T_d)$ is $\cy 2$-equivariantly normally cobordant to  $S^{4i-3}(2i-1)$ by a normal cobordism which is the identity on a neighbourhood of the fixed set. The remaining surgery obstruction to obtaining an equivariant $s$-cobordism lies in the action of $L_{4i-2}(\bZ[\cy 2],w)\cong \cy 2$ on the relative structure set of the complement of the fixed set. In the smooth category, this action is difficult to determine, but in the topological category the action is trivial (since this element is in the image of the assembly map).
\end{proof}

We can now glue in the simply-connected Kervaire manifold surgery problem in a tubular neighbourhood $U$ of $Y \subset X\times I$. The boundary $\bd U = \widetilde Y \times _{\cy 2} S^{4i-3}$, where $\widetilde Y$ is the double covering of $Y$ given by $w_1(\zeta)$ and the fibre sphere has the action  $S^{4i-3}(2i-1)$. We have a homeomorphism 
$$\widetilde Y \times _{\cy 2} S^{4i-3} \approx \widetilde Y \times _{\cy 2} \bd K^{4i-2}$$
given by Lemma \ref{brieskorn}, and this is used to glue in  $\widetilde Y \times _{\cy 2} K^{4i-2}$ defined by the extension of the complex conjugation involution  over $K^{4i-2}$.
The characteristic class formula shows as before that
the surgery obstruction
$$\sigma(f,b)_{(2)} = \kappa_m(\beta) + {\rm \ lower\  terms}$$
where  the ``lower terms" are the images under $A_j$ for $j <m$.
\end{proof}

\section{The proof of Theorem A (at odd primes) and Theorem B}\label{sec:four}
By Theorem \ref{thm:2-local},  the inertial subgroup and the closed manifold subgroup are both equal to the image of the assembly map,   after localization at $2$. We now localize away from $2$, and this is where we will need to stabilize to identify the image of the assembly map. As above, let $\SA_q(\pi,w)$, $0\leq q \leq 3$, denote the  periodic image of the assembly map, generated by all the $A_n(\pi,w)$ for $n \equiv q \pmod 4$. We will prove:

\begin{theorem}\label{thm:periodic} Let $\pi$ be a finitely-presented group, and $w$ an orientation character. Then 
\begin{enumerate}
\item $I_{n+1}(\pi,w)\otimes \Zodd = C_{n+1}(\pi,w) \otimes \Zodd$, and
\item $\SI_q(\pi,w) = \SC_q(\pi,w)=\SA_q(\pi,w)$.
\end{enumerate} 
\end{theorem}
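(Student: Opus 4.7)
By Theorem~\ref{thm:2-local} we already have the $2$-local equalities $I_{n+1}(\pi,w)\otimes\Ztwo = C_{n+1}(\pi,w)\otimes\Ztwo = A_{n+1}(\pi,w)\otimes\Ztwo$, and similarly for the periodic variants. Combined with the chain of containments $I\subseteq C\subseteq A$, it therefore suffices to prove the $\Zodd$-local statements (i) $C_{n+1}(\pi,w)\otimes\Zodd\subseteq I_{n+1}(\pi,w)\otimes\Zodd$ and (ii) $\SA_q(\pi,w)\otimes\Zodd\subseteq\SI_q(\pi,w)\otimes\Zodd$, from which the full integral assertions follow.

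For (i), consider any closed manifold obstruction $\sigma_X(f)_{(odd)}\in C_{n+1}(\pi,w)\otimes\Zodd$, arising from $(f,b)\colon X\to X$ on a closed $(n+1)$-manifold with classifying map $c\colon X\to\Kpi$. The Taylor-Williams formula gives
\[
\sigma_X(f)_{(odd)}=A_*\,c_*\!\bigl(f^*(\Delta)\cap [X]_{\bo}\bigr),
\]
and the forthcoming Lemma~\ref{lem:positive} ensures that $f^*(\Delta)$ has no zero-degree component. Under the odd-primary Bott splitting $\bo[1/2]\simeq\bigvee_{i\geq 0}\Sigma^{4i}H\Zodd$, the assembly input $c_*(f^*(\Delta)\cap [X]_{\bo})$ therefore decomposes into ordinary homology classes $\alpha_i\in H_{n+1-4i}(\Kpi^w;\Zodd)$ with $i\geq 1$. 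Since $MSO[1/2]$ itself splits as a wedge of shifted $H\Zodd$'s, each $\alpha_i$ is realised by an oriented manifold $g_i\colon Y_i^{n+1-4i}\to\Kpi$ with $w_1(Y_i)=g_i^*(w)$. We then re-run the fibrewise Milnor manifold construction from the proof of Theorem~\ref{thm:2-local} on the $n$-manifold $X_i=Y_i\times S^{4i-1}$: embed $Y_i$ via a section, and glue in the simply-connected Milnor problem $\id_{Y_i}\times\varphi$ over a tubular neighbourhood of $Y_i\subset X_i\times\{1/2\}\subset X_i\times I$. The odd-primary characteristic class formula identifies the resulting inertial obstruction as $A_*((g_i)_*[Y_i]_{\bo})$ plus terms in strictly lower $\bo$-filtration, and a downward induction on the $\bo$-filtration cancels the lower terms against further inertial elements, completing (i).

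For (ii), the same fibrewise construction applies: given $A_m(\beta)\in\SA_q\otimes\Zodd$ with $\beta\in H_m(\Kpi^w;\bbL_\bullet)\otimes\Zodd$ and $m\equiv q\pmod 4$, the Bott splitting writes $\beta=\sum_{i\geq 1}\beta_i$ with $\beta_i\in H_{m-4i}(\Kpi^w;\Zodd)$; realising each $\beta_i$ by an oriented manifold $Y_i^{m-4i}$ and running the Milnor construction on $X_i=Y_i\times S^{4i-1}$ in dimension $n+1=m$ exhibits $A_*(\beta_i)$ as an inertial obstruction. Combined with Theorem~\ref{thm:2-local}(ii) this yields $\SI_q(\pi,w)=\SA_q(\pi,w)$, and the containment $\SI_q\subseteq\SC_q\subseteq\SA_q$ forces equality throughout. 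The principal difficulty is the bookkeeping of the downward induction: one must verify that after extracting the top-degree inertial piece, the residual lower-filtration terms from the characteristic class formula again lie in the $\bbL_\bullet$-restricted assembly image---so that the induction hypothesis applies---rather than acquiring an $i=0$ component, the very contribution whose absence (enforced by Lemma~\ref{lem:positive}) underlies the failure of $C_n=A_n$ in Example~\ref{ko-example}.
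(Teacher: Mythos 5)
Your reduction to the two $\Zodd$-local inclusions is fine, but the engine you use for them is not: there is no ``odd-primary Bott splitting'' $\bo[1/2]\simeq\bigvee_{i\geq 0}\Sigma^{4i}H\Zodd$, and $MSO[1/2]$ does not split as a wedge of shifted $H\Zodd$'s. Those splittings hold rationally (and, for $MSO$, in the Eilenberg--MacLane sense only at the prime $2$); away from $2$, $MSO$ splits into Brown--Peterson spectra and $\bo$ is not a generalized Eilenberg--MacLane spectrum. This is not a repairable technicality: if your splittings were true, your argument for (ii) would prove the \emph{unstabilized} statement $A_{n+1}(\pi,w)\otimes\Zodd\subseteq I_{n+1}(\pi,w)\otimes\Zodd$, i.e.\ $C_n=A_n$ away from $2$, which Example~\ref{ko-example} (via Lemma~\ref{ex: finite complex}) shows is false --- precisely because $\ko_m(X;\Zodd)$ is \emph{not} a sum of ordinary homology groups and its elements need not lie in the image of $\Omega^{SO}_m(X)\otimes\Zodd$. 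An argument that proves too much cannot be correct, and the decomposition of $c_*(f^*(\Delta)\cap[X]_{\bo})$ into classes $\alpha_i\in H_{n+1-4i}(\Kpi^w;\Zodd)$ is exactly the step that fails. Your closing worry about an ``$i=0$ component'' also misdiagnoses the obstruction: the degree-$0$ issue is disposed of by Lemma~\ref{lem:positive}, and the failure of $C_n=A_n$ comes from $\ko$-classes not representable by bordism classes in the given dimension, not from a zero-degree term.

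What the paper actually does is stay inside bordism rather than pass to ordinary homology. For (i), the element of $C_{n+1}\otimes\Zodd$ comes with a closed $(n+1)$-manifold $V$ in hand; Lemma~\ref{lem:positive} represents $\Delta$ by cobordism classes $\widehat\Delta_k\in\Omega^{4k}(G/TOP)\otimes\Zodd$ with $k>0$, and Atiyah's Poincar\'e duality in \emph{bordism} over $V$ converts $f^*(\Delta)\cap[V]_{\bo}$ into a finite $\Zodd$-combination of genuine manifold classes $[Y^{n+1-4k},g_k]$, $k>0$, to which the fibrewise Milnor construction on $Y\times S^{4k-1}$ applies, with a downward induction on the lower terms. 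It is the availability of bordism Poincar\'e duality over a closed manifold --- not any splitting of $\bo[1/2]$ --- that distinguishes $C$ from $A$ here. For (ii), where no such closed manifold is given, the paper passes to a finite skeleton $X$ of $\Kpi$ with $\dim X<m$, uses the Conner--Floyd isomorphism $\Omega^{SO}_{k+4*}(X)\otimes_{\Omega^{SO}_*(\pt)}\Zodd\cong KO_k(X;\Zodd)$ together with $4$-fold periodicity to write $\alpha$ as a combination of fundamental classes of manifolds of dimensions $m+4k$ --- one must go \emph{up} in dimension --- and this is exactly why only the periodic subgroups satisfy $\SI_q=\SC_q=\SA_q$. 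Your proposal, by contrast, never uses stabilization at all, which is another symptom of the same error.
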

The procedure in this setting will be similar. The domain of the assembly map is now 
 $$H_{n+1}(\Kpi^w; \bbL_0)\otimes \Zodd \cong \ko_{n+1}(\Kpi^w; \Zodd),$$ and 
 we will write elements arising from $C_{n+1}(\pi,w)$ as a sum with $\Zodd$-coefficients of inertial surgery problems with surgery obstructions in $I_{n+1}(\pi,w)$.

   \begin{remark}\label{rem: connective cover}  
   Note for use in part (ii) that the obstructions of these surgery problems actually come from
   $$ H_{n+1}(\Kpi; \bbL_\bullet)\otimes \Zodd \cong \ko\co_{n+1}(\Kpi; \Zodd),$$
   where $\ko\co_*$ denotes the homology theory given the  $0$-connective cover $\bo\co$ of the spectrum $\bo$. The cofibration of spectra $\bo\co \to \bo \to H\bZ$ induces a long exact sequence
$$ \ldots \to H_{*+1}(X;\Zodd)\to \ko\co_*(X;\Zodd)\to \ko_*(X;\Zodd) \xrightarrow{h} H_*(X;\Zodd) \to \dots$$
where $h\colon  \ko_*(X;\Zodd) \xrightarrow{h} H_*(X;\Zodd)$ is the Hurewicz homomorphism. It follows that, after periodic stabilization,  the same image $\SA_q(\pi,w)$ is generated  from the domains 
   $ H_{n+1}(\Kpi; \bbL_0)\otimes \Zodd \cong \ko_{n+1}(\Kpi; \Zodd)$, with $n+1 \equiv q \pmod 4$. 
\end{remark}
 We need more information about the class $\Delta \in KO^0(\bbL_\bullet\,; \Zodd)$ used in the characteristic class formula for the surgery obstruction (see Section \ref{sec:two}). Recall that there is a Conner-Floyd isomorphism 
 \cite[p.~39]{conner-floyd2} for cobordism
 $$h^0\colon \Omega^{4\ast}(X)\otimes_{\Omega^\ast(\pt)} \Zodd \xrightarrow{\approx} KO^0(X;\Zodd)$$
 which gives the $KO$-theory for a finite complex $X$ in terms of  oriented cobordism away from 2 (this formula uses the identification $MSp[1/2] \simeq MSO[1/2]$).
 \begin{lemma}\label{lem:positive} The class $\Delta \in KO^0(G/TOP; \Zodd)$ is represented by  a formal sum of classes $\widehat\Delta_k\in \Omega^{4k}(G/TOP)\otimes \Zodd$ of positive degrees  $k>0$.
 \end{lemma}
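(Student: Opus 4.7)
The key observation is that the Sullivan--Kirby--Siebenmann equivalence $\sigma\colon G/TOP[1/2]\isomto BO[1/2]$ corresponding to $\Delta$ is an infinite loop map and so in particular pointed, which forces $\Delta$ to vanish at the basepoint. The lemma says this reducedness propagates through the Conner--Floyd isomorphism to allow a representative with no $k=0$ summand.

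I would first choose any Conner--Floyd lift $\Delta=h^0\bigl(\sum_{k\ge 0}\widehat\Delta_k\otimes 1\bigr)$ with $\widehat\Delta_k\in\Omega^{4k}(G/TOP)\otimes\Zodd$, and restrict along the inclusion $i\colon\pt\hookrightarrow G/TOP$. Naturality of $h^0$ gives $i^\ast\Delta=\sum_k h^0(i^\ast\widehat\Delta_k\otimes 1)$; for $k\ge 1$ each contribution vanishes because $\Omega^{4k}(\pt)=0$, while for $k=0$ the restriction lands in $\Omega^0(\pt)\otimes_{\Omega^\ast(\pt)}\Zodd=\Zodd$ and equals $(i^\ast\widehat\Delta_0)\otimes 1$. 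Since $\sigma$ is pointed we have $i^\ast\Delta=0$, so this forces $i^\ast\widehat\Delta_0=0\in\Zodd$, meaning $\widehat\Delta_0$ already lies in the reduced group $\widetilde\Omega^0(G/TOP)\otimes\Zodd$.

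To absorb this reduced degree-zero class into the positive-degree summands, I would invoke the Conner--Floyd module relation $\alpha\cdot[\CP^2]^\ast\otimes 1=\alpha\otimes 1$ in $\Omega^\ast(G/TOP)\otimes_{\Omega^\ast(\pt)}\Zodd$, valid since $[\CP^2]\in\Omega_4(\pt)$ has signature one and so augments to $1\in\Zodd$. Combined with the $3$-connectivity of $G/TOP[1/2]$---so that the reduced $\widetilde\Omega^0(G/TOP)\otimes\Zodd$ is controlled by its positive-degree Atiyah--Hirzebruch layers---this lets one rewrite $\widehat\Delta_0$ as a class $\widehat\Delta_1'\in\Omega^4(G/TOP)\otimes\Zodd$ giving the same Conner--Floyd image. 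The resulting representative has $k\ge 1$ throughout.

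The principal obstacle is precisely this last step: strictly speaking $\widetilde\Omega^0(G/TOP)\otimes\Zodd$ is not automatically in the image of multiplication by $[\CP^2]^\ast$ from $\Omega^4(G/TOP)\otimes\Zodd$, so some additional input is required. The cleanest route is to invoke the odd-primary spectrum splitting $MSO[1/2]\simeq\bigvee_k\Sigma^{4k}\bo[1/2]$ underlying Conner--Floyd, under which the reduced part of $\widetilde\Omega^0(G/TOP)\otimes\Zodd$ is visibly carried by the positive-suspension summands and so aligns with the $k\ge 1$ terms of the Conner--Floyd decomposition.
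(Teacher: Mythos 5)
Your opening moves parallel the paper: the reducedness of $\Delta$ (the paper gets it because the defining functionals $S_k$ have positive degree, you get it from the basepoint), and the reduction of the lemma to trading the degree-zero Conner--Floyd term for positive-degree ones. You also correctly identify that the $[\CP^2]$-relation only rewrites degree-$4$ classes as degree-$0$ classes and not conversely, so the real content is exactly the step you flag as the ``principal obstacle.'' The problem is that your proposed fix for that step is false: there is no splitting $MSO[1/2]\simeq\bigvee_k\Sigma^{4k}\bo[1/2]$. At an odd prime $p$, $MSO_{(p)}$ is a wedge of suspensions of $BP$ (Brown--Peterson), whose mod $p$ cohomology is a sum of shifted copies of $\cA//E(Q_0,Q_1,Q_2,\dots)$, while a wedge of suspensions of $\bo_{(p)}$ has cohomology a sum of shifted copies of $\cA//E(Q_0,Q_1)$; comparing the action of $Q_2$ on the bottom class (the Thom class of $MSO$ is annihilated by all $Q_i$, the bottom class of $\bo_{(p)}$ is not annihilated by $Q_2$) rules out any such equivalence. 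More tellingly, a splitting of this kind compatible with the $KO$-orientation would make $\Omega^{SO}_*(X)\otimes\Zodd\to\ko_*(X;\Zodd)$ surjective for every finite complex $X$, which is precisely what Example \ref{ko-example} (following Conner--Smith and Johnson--Wilson) shows to be false --- the failure of that surjectivity is the very reason Theorem B needs periodic stabilization. So the last step of your argument cannot be repaired along those lines, and your appeal to $3$-connectivity plus the Atiyah--Hirzebruch filtration does not substitute: filtration in positive degrees does not by itself give generation in positive degrees as a module over $\Omega^*(\pt)$.

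What actually closes the gap in the paper is Quillen's theorem \cite[Theorem~5.1]{quillen_1971}: for a connected finite complex the reduced cobordism is generated as a module over the coefficient ring by classes of strictly positive degree, and the same holds for $\wO^{*}(-)\otimes\Zodd$ in the oriented case. Since $\Delta$ lies in $\wKO^0(G/TOP;\Zodd)$, the reduced Conner--Floyd isomorphism gives a lift $\widehat\Delta\in\wO^{4*}(G/TOP)\otimes_{\Omega^*(\pt)}\Zodd$, and Quillen's theorem rewrites it with all terms in degrees $4k$, $k>0$. Note also that $G/TOP$ is not a finite complex, so the paper works over its finite skeleta and regards the tensor product as a quotient of the corresponding direct product, representing elements by infinite formal sums; your argument would need this limiting step as well. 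If you replace your splitting claim by an appeal to Quillen's generation theorem (applied skeleton by skeleton), your outline becomes essentially the paper's proof.
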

 \begin{proof}
 We first recall the description of $\Delta$ given in \cite[Chap.~4]{madsen-milgram1}.
 For each $k>0$, let $S_k\colon \Omega_{4k}(G/TOP) \to \bZ$ be the homomorphism which assigns to an element $f\colon X \to G/TOP$, the signature difference
 $(\sign M - \sign X)/8$  for the associated surgery problem $M \to X$.
 These are $\Omega_*(\pt)$-module homomorphisms, where $\Omega_*(\pt)$ acts on $\Zodd$ via the signature in dimensions $\equiv 0 \pmod 4$, and zero otherwise.
 
 By \cite[Lemma 4.26]{madsen-milgram1}, the collection $\{S_k\}$ induces a homomorphism
 $$\sigma_0\colon KO_0(G/TOP;\Zodd) \to \Zodd.$$
  The proof uses the Conner-Floyd isomorphism and an inverse limit argument over finite skeleta of $G/TOP$. Now one applies the universal coefficient formula for $KO$-theory \cite[(2.8)]{yosimura1}, and in particular the isomorphism
 $$\eval\colon KO^0(G/TOP;\Zodd) \to \Hom_{\bZ}(KO_0(G/TOP;\Zodd), \Zodd)$$
 to get the element 
 $$\Delta \in \wKO^0(G/TOP;\Zodd) = [G/TOP, BO[1/2]],$$
  with $\eval(\Delta) = \sigma_0$ (see \cite[p.~86]{madsen-milgram1} and the proof of \cite[(4.26)]{madsen-milgram1} for the assertion that $\eval$ is an isomorphism). Note that the element $\Delta$ lies in reduced $KO^0$  since the homomorphisms $S_k$ have positive degree.  The associated map
 $\sigma\colon G/TOP[1/2] \to BO[1/2]$ is the Sullivan homotopy equivalence (see \cite[4.28]{madsen-milgram1}). 
 
By the Conner-Floyd isomorphism for cobordism, there is a unique element
 $$\widehat\Delta  \in \wO^{4\ast}(G/TOP)\otimes_{\Omega^\ast(\pt)} \Zodd,$$
 such that
 $h^0(\widehat\Delta) = \Delta \in \wKO^0(G/TOP;\Zodd)$. 
We may consider this tensor product as a quotient of the corresponding direct product, and represent elements as infinite formal sums.
  
Quillen \cite[Theorem~5.1]{quillen_1971} proved that the reduced (complex) cobordism group of a connected finite complex is generated as a $U^\ast(\pt)$-module by elements in strictly positive dimensions, and the same is true for oriented bordism as an $\Omega^\ast(pt)$-module after inverting 2. Therefore, $\widehat \Delta$ is represented in the tensor product by a formal sum of elements 
 $$\Delta_k  \in \wO^{4k}(G/TOP;\Zodd)$$
 with $k>0$. 
 \end{proof}

\begin{proof}[The proof of Theorem \textup{\ref{thm:periodic}}, part \textup{(i)}]
We  consider an element 
$$\alpha
 =(f^*(\Delta) \cap  [V]_{\bo})
  \in \ko_{*}(V^w; \Zodd),$$ given by  $f\colon V \to \bbL_{\bullet}$ and reference map $c\colon V \to \Kpi$, with $c^*(w) = w_1(V)$,
  whose image under the assembly map gives an element of $C_{n+1}(\pi,w)\otimes \Zodd$. By Lemma \ref{lem:positive} and Poincar\'e duality for bordism theory \cite{atiyah_1961},  we can express 
 $$\alpha = \sum _{k> 0} a_k [Y^{n+1-4k}, g_{k}]$$ as a finite $\Zodd$-linear combination of manifolds $g_{k}\colon Y^{n+1-4k} \to V$, with $g_k^*(w_1(V)) = w_1(Y)$, and coefficients $a_k \in \Zodd$ for  $k>0$. 
 Let $g\colon Y^{n+1-4k} \to  \Kpi$ be a manifold with reference map (by composing with  $V \to \Kpi$), such that 
 $k$ is the smallest integer with $a_k \neq 0$. Hence $a_k\cdot g_*([Y]_{\bo}) = \alpha + \ {\rm lower \ terms}$.  
 
We will now construct an element in $I_{n+1}(\pi,w)$. We write $n+1 - m =4k$, and define $X = Y^m \times S^{4k-1}$. The surgery problem 
$$(h,b)\colon (W^{n+1}, \bd W) \to (X \times I, X\times \bd I)$$
 will be constructed as before, by gluing in the Milnor manifold surgery problem $$(M^{4k}, \bd M) \to (D^{4k}, \bd D^{4k})$$ fibrewise along a tubular neighbourhood $U \subset X \times I$ of $Y \subset X \times \{1/2\}$ in the interior of $X \times I$.
 Let $h\colon \Sigma(X) \to \bbL_\bullet$  also denote the normal invariant of $(h,b)$, which factors as the composite
 $$h\colon \Sigma(X) \xrightarrow{project} Y \times D^{4k}/Y \times S^{4k-1} \xrightarrow{1\times \varphi} \bbL_\bullet,
$$
where $\varphi\colon S^{4k} \to \bbL_\bullet$ is the normal invariant of the Milnor problem (i.e.~ the generator of $\pi_{4k}(\bbL_\bullet) = \bZ$).
  The
 characteristic class formula
$$a_k\cdot \sigma(h)_{(odd)} = a_k \cdot A_*(g\times 1)_*\left (h^*(\Delta) \cap [Y\times S^{4k}]_{\bo} \right) = A_*(\alpha) + \ {\rm lower \ terms},$$
since
$h^*(\Delta) \cap [Y\times S^{4k}]_{\bo} = [Y]_{\bo}$, and $a_k \cdot g_*([Y]_{\bo}) = \alpha+ \ {\rm lower \ terms}$.
This completes the proof of part(i) of Theorem \ref{thm:periodic}.
\end{proof}
\begin{remark}
This formula is consistent with the rationalization of the calculation at 2.
Note that the Poincar\'e dual $\cL(Y)$ of the $\cL$-genus gives the rational part of the $\bbL^0$-theory fundamental class $[Y]_{\bbQ} \cap \cL(Y) \in H_{m-4*}(Y;\bbQ)$, by 
\cite[25.17]{ranicki-assembly}. 
Under the equivalence $ \bbL^0\otimes \Zodd \simeq \bo(\Zodd)$, the fundamental class $[Y]_{\bbL^0} \in H_m(Y; \bbL^0\otimes \Zodd)$ maps to $[Y]_{\bo} \in \ko_m(Y;\Zodd)$. 
\end{remark}
\begin{proof}[The proof of Theorem B]
By Theorem \ref{thm:2-local} it is enough to show that $\SI_q(\pi,w)\otimes \Zodd = \SA_q(\pi,w)\otimes \Zodd$, for $0 \leq q \leq 3$.
We represent an arbitrary element of $\SA_q(\pi, w)$ by the image $A_m(\alpha)\in A_{m}(\pi,w)\otimes \Zodd$ under the assembly map of an element $\alpha \in \ko_{m}(\Kpi^w; \Zodd)$,   where $m\equiv q\pmod 4$ and $m\geq 5$.

Since $KO$-homology satisfies the wedge axiom, the group $\ko_{m}(\Kpi^w; \Zodd)$ is the direct limit of the $\ko$-homology of the finite skeleta of the classifying space $\Kpi$. By periodic stabilization if necessary, we may assume that 
$$\alpha \in \Image\big (\ko_m(X^w;\Zodd)\to \ko_m(\Kpi^w;\Zodd)\big ),$$
 where $X$ is a suitable  finite skeleton of $\Kpi$ with $\dim X < m$. But then,
$\ko_m(X^w;\Zodd) = KO_m(X^w;\Zodd)$.
 By using the Conner-Floyd theorem \cite[4.15]{madsen-milgram1} and the $4$-fold periodicity $KO_{m+4k}\cong KO_m$, for $k>0$, we can express 
 $$\alpha = \sum _{k>0} a_k [Y^{m+4k}, g_{k}]$$ as a finite $\Zodd$-linear combination of the images of fundamental classes $[Y]_{\bo}$ of manifolds $g_{k}\colon Y^{m+4k} \to \Kpi$, with $g_k^*(w) = w_1(Y)$ and $a_k \in \Zodd$.  The same construction (glueing in  Milnor manifold surgery problem $(M^{4l}, \bd M) \to (D^{4l}, \bd D^{4l})$) used in the proof of part (i), when applied to a typical element $(g_k)_*[Y]_{\bo}\in \ko_{n+1}(\Kpi;\Zodd)$, produces an inertial surgery problem with obstruction in $I_{m + 4k + 4l}(\pi, w)$. It follows
 that the periodic stabilization of the image $A_m(\alpha)$ is the sum of surgery obstructions of elements in various inertial subgroups $I_{n+1}(\pi, w)$, for $n+1 \equiv m \pmod 4$. Therefore $\SA_m(\pi,w)\otimes \Zodd = \SI_{m}(\pi,w)\otimes \Zodd$.
  \end{proof}
\begin{example}\label{ko-example}
We give an example (based on work of Conner-Smith \cite{conner-smith1} and Johnson-Wilson \cite{johnson-wilson1})
to show that,  in a given dimension $n$, the image of the assembly map $A_n(\pi,w)$ is not always equal to the closed manifold subgroup $C_n(\pi,w)$. In particular, this contradicts \cite[18.6(i)]{ranicki-assembly}, and shows that for a suitable finite complex $X$ the
elements of
$H_n(X; \bbL_{\bullet})$ are not always represented by closed manifold surgery problems. 

We will need \cite[Prop.~2.6]{hhausmann1}, which is
a variation of the Kan-Thurston theorem  \cite{kan-thurston1},  \cite{baumslag-dyer-heller1}.  For any finite complex $X$, there exists a finitely-presented group $\Gamma_X$ with $B\Gamma_X$ of dimension $\leq \dim X$, and an epimorphism $\varphi\colon \Gamma_X \to \pi_1(X)$ with perfect kernel. Moreover, there is a lifting
$\widetilde \alpha_X\colon X \to (B\Gamma_X)^{+}_{\ker\varphi}$ of the classifying map $\alpha_X\colon X \to \Kpi_1(X)$ which is a homotopy equivalence. In other words, $X$ is obtained by applying the Quillen plus construction to $B\Gamma_X$. It follows (from the Atiyah-Hirzebruch spectral sequence) that $\ko_*(X) \cong \ko_*(B\Gamma_X)$. 
Since the image of $\ko\co_*(X)\to \ko_*(X)$ equals the kernel of the Hurewicz homomorphism, it is therefore enough to produce the following example:
\begin{lemma}\label{ex: finite complex}
There exists a finite complex $X$ such that the natural map
$$\Omega^{SO}_m(X)\otimes \Zodd \to \ko_m(X;\Zodd)$$
is not surjective onto the kernel of the Hurewicz homomorphism
$$\ko_m(X;\Zodd) \to H_m(X;\Zodd),$$
 in some dimension $m\geq 5$.
\end{lemma}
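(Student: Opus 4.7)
The plan is to localize at a fixed odd prime $p$ and produce the counterexample there. At an odd prime $p$, the connective spectrum $\ko_{(p)}$ splits as a wedge of suspensions of the Adams summand $\ell = BP\langle 1\rangle$, whose coefficient ring is $\bZ_{(p)}[v_1]$ with $|v_1| = 2p-2$. The Hurewicz map for $\ell$-homology fits into the cofibre sequence of spectra
$$\Sigma^{2p-2}\ell \xrightarrow{\ v_1\ } \ell \xrightarrow{\ h\ } H\bZ_{(p)},$$
which yields a long exact sequence identifying $\ker(h\colon \ell_m(X) \to H_m(X;\bZ_{(p)}))$ with the image of $v_1\cdot\colon \ell_{m-2p+2}(X) \to \ell_m(X)$. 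It therefore suffices to exhibit a finite complex $X$, a dimension $m\geq 5$, and a $v_1$-multiple $\alpha = v_1\beta \in \ell_m(X)_{(p)}$ that does not lie in the image of the Conner-Floyd map $\Omega^{SO}_m(X)_{(p)} \to \ell_m(X)_{(p)}$.

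Such $X$ and $\alpha$ are supplied by the calculations of Conner-Smith \cite{conner-smith1} and Johnson-Wilson \cite{johnson-wilson1}. Their method constructs finite complexes, built from smash products of mod-$p$ Moore spaces, Smith-Toda complexes, and skeleta of classifying spaces, whose $BP$- and $\ell$-homology contain $v_1$-torsion classes of large projective dimension over $BP_*$. The image of $\Omega^{SO}_m(X)_{(p)}$ in $\ell_m(X)_{(p)}$ is spanned by pushforwards $g_\ast[Y^m]_{\bo}$ of $\bo$-fundamental classes of oriented $m$-manifolds, and the Conner-Floyd machinery bounds the $v_1$-height of these images in terms of the topological dimension of $X$. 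Choosing $\alpha$ to be a $v_1$-multiple past this bound yields a class in $\ker(h)$ not representable by oriented bordism in degree $m$. The passage from the complex case, where \cite{conner-smith1} and \cite{johnson-wilson1} work most directly with $\ku$ and $BP$, to the real case at odd primes uses only the splitting of $\ko_{(p)}$ into Adams summands together with the identification $MSp[1/2]\simeq MSO[1/2]$, so that the complex and oriented Conner-Floyd maps agree at odd primes up to $\Zodd$-scalars.

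The main obstacle is the non-representability claim itself: one must rule out \emph{every} $\Zodd$-linear combination of pushforwards of oriented fundamental classes, not merely the obvious candidates arising from the construction. This is where the projective dimension and $v_1$-height estimates of \cite{johnson-wilson1} are essential, as they provide a priori bounds on the image of the Conner-Floyd map in a fixed degree for a finite complex of fixed dimension. Once $\alpha$ is chosen past these bounds, the cofibre sequence above places it in $\ker(h)$ automatically, and the dimensional requirement $m\geq 5$ is arranged by suspending $X$ as many times as needed.
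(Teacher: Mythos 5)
Your framework is the right one and matches the paper's setup: localize at an odd prime, pass to the Adams summand $\ell = BP\langle 1\rangle$ of $\ko_{(p)}$, use the cofibre sequence $\Sigma^{2p-2}\ell \xrightarrow{v_1} \ell \to H\bZ_{(p)}$ to identify the Hurewicz kernel with $v_1$-multiples, and then look for a $v_1$-multiple not hit by bordism. But the heart of the proof is missing. Your mechanism --- that ``the Conner-Floyd machinery bounds the $v_1$-height of the image in terms of $\dim X$'' so one can choose a $v_1$-multiple ``past this bound'' --- is not a theorem of \cite{conner-smith1} or \cite{johnson-wilson1}, and it points in the wrong direction: the image of $\Omega^{SO}_m(X)\otimes\Zodd$ in $\ell_m(X)$ is an $\Omega^{SO}_*$-submodule, hence closed under multiplication by a bordism lift of $v_1$, and Conner--Smith's Theorem~10.8 says precisely that every class becomes representable after multiplication by a sufficiently high power of $t$ (equivalently $v_1$). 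So being a high $v_1$-multiple cannot by itself force non-representability. Likewise, the projective-dimension criteria (Johnson--Smith, Conner--Smith skeleta of $K(\cy p,n)$) only show that $\Omega^U_*(X)\to\ku_*(X)$, resp. $BP_*(X)\to BP\langle 1\rangle_*(X)$, fails to be onto; they do not place a non-representable class in the kernel of the Hurewicz map, which is what the lemma requires --- the paper explicitly notes that for those examples the Hurewicz map may be injective, so no conclusion follows from them alone.

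What actually closes the gap in the paper is a parity argument built from a specific complex: take the Moore spectrum $M(p)$, the Adams $v_1$-self-map and its $n$-fold iterate to form $X_1=M(p,v_1^n)$ with $n\geq 2$, and then a Hopkins--Smith $v_2^m$-self-map \cite{hopkins-smith1} of $X_1$ to form $X=M(p,v_1^n,v_2^m)$. Then $BP_*(X)=BP_*/(p,v_1^n,v_2^m)$ is concentrated in even degrees, while $v_2$ acts as zero on $BP\langle 1\rangle_*$, so $BP\langle 1\rangle_*(X)$ splits off an odd-degree copy of $BP\langle 1\rangle_*(X_1)$ containing nonzero $v_1$-multiples (here $n\geq 2$ is needed). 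Every class in the image of bordism lands in the image of $BP_*(X)$ under the compatible splittings of $MSO_{(p)}$ and $\bo_{(p)}$, and that image is purely even; so \emph{all} $\Zodd$-linear combinations of pushforwards of fundamental classes are excluded at once, for degree reasons. This construction --- in particular the $v_2$-self-map input, which postdates and goes beyond \cite{conner-smith1} and \cite{johnson-wilson1} --- is the idea your proposal lacks, and without it the ``rule out every linear combination'' step has no support.
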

\begin{remark} Recall that $H_n(X; \bbL_{\bullet})\otimes \Zodd \cong \ko\co_n(X;\Zodd)$.
This example also shows that the elements of $H_n(K; \bbL_{\bullet})$ are not always represented by closed topological manifold surgery problems. The reason is that the natural maps
$$\Omega^{SO}_m(X)\otimes \Zodd \to \ko_m(X;\Zodd)$$
and
$$\Omega^{STOP}_m(X)\otimes \Zodd \to \ko_m(X;\Zodd)$$
have the same image, for $X$ a finite complex. 
This is essentially a result of Hodgkin-Snaith (see \cite[5.22, 5.24]{madsen-milgram1}:  the natural maps
$$\widetilde{KO}^*(MSTOP[1/2]) \xrightarrow{\approx} \widetilde{KO}^*(MSPL[1/2])
\xrightarrow{\approx} \widetilde{KO}^*(MSO[1/2])$$
are all isomorphisms (using the fact that $MSTOP$ and $MSPL$ are the same away from 2). The map
$$\Omega^{SPL}_m(X)\otimes \Zodd \to \ko_m(X;\Zodd)$$
is given by the map on homotopy groups induced by the map
$$MSPL[1/2]\, \wedge X \to  BO[1/2]\, \wedge X$$
provided by smashing over the identity on $X$ with  the 
Sullivan orientation $\Delta_{SPL}$ (see Madsen-Milgram \cite[p.~100]{madsen-milgram1},  formula 5.2 and Lemma 5.3 to compare with $\Delta_{SO}$).
\end{remark}

We first discuss the analogous question for complex bordism.
In a series of papers Conner and Smith studied the natural map
$$h\colon \Omega^U_m(X) \to \ku_m(X)$$
from complex bordism to connective complex $K$-homology theory induced by the $K$-theory orientation (see Conner and Floyd \cite{conner-floyd2}).  The coefficient ring $\Omega^U_*(\pt) $  is a polynomial ring on even dimensional generators and 
the coefficient ring $\ku_*(\pt) = \bZ[t]$, where $\deg t = 2$. Multiplication by $[\CP^1]$ on $\Omega^U_*(X)$ corresponds under $h$ to multiplication by $t$ on  $\ku_m(X)$ (see Stong \cite{stong1}).

 Conner and Smith show in \cite[Theorem~10.8]{conner-smith1} that, for any finite complex $X$ and for any class $a\in \ku_m(X)$, there exists an integer $n = n(a) \geq 0$ such that $t^n a \in \Image(\Omega^U_{m+2n}(X) \to \ku_{m+2n}(X)$. However, according to a result of Johnson and Smith \cite[Theorem~1]{johnson-smith1}, for a finite complex $X$  the natural map
$\Omega^U_*(X) \to \ku_*(X)$ is onto if and only if the projective dimension of $\Omega_*^U(X)$ over $\Omega^U_*(\pt)$ is $\leq 2$.

 On the other hand, by a result of Conner and Smith \cite[Theorem~5.1]{conner-smith3},  a large $N$-skeleton $X$ of $K(Z/p, n)$, $p$ an odd prime, will have a large homological dimension over $MU$. We may pick one with $\hdim_{\Omega^U_*(\pt)}\Omega^U_*(X) \geq 3$, and with $p$ an odd prime, and both $n$ and $N$ fairly large (see also \cite[p.~854]{smith0} for an explicit example).
Such a finite complex $X$ gives an example to show that the natural map
$$\Omega^{U}_m(X)\otimes \Zodd \to \ku_m(X;\Zodd)$$
is not surjective in some dimension $m\geq 5$ (the dimension $m$ can always be raised by suspension of the example).  However, in this case the  Hurewicz map 
$\ku_m(X;\Zodd) \to H_m(X; \Zodd)$ may be injective, and non-realizable elements from
$\ku\co_m(X;\Zodd)$ may not exist. It does however show that stabilizing is sometimes actually necessary to realize elements of $\ku_m(X;\Zodd)$ by fundamental classes of almost complex manifolds.

\medskip
This theme was definitively addressed by Johnson and Wilson \cite{johnson-wilson1} 
using the $p$-local Brown-Peterson homology theories $BP_*$. Recall that for a given prime $p$, the
localized complex bordism spectrum  $MU_{(p)}$ splits as a wedge of shifted copies of $BP$, and the coefficient ring $BP_*(\pt) = \Zp[v_1, v_2, \dots, v_n, \dots]$, where $v_n$ has degree $2p^n-2$ (see \cite[Theorem 1.3]{brown-peterson1}). There are also associated theories $BP\la n\ra$, for $n \geq 1$, with $BP\la n\ra_*(\pt) = \Zp[v_1, \dots, v_n]$, constructed by Wilson \cite{wilson1}, \cite{wilson2}. The spectrum $BP\la 1\ra$ is a wedge summand of $\bu_{(p)}$ under the identification $v_1 = t^{p-1}$ (see \cite[2.7]{johnson-wilson1}). 

The Brown-Peterson theory also shows that at odd primes the spectrum $MSO_{(p)}$  splits as a wedge of shifted copies of $BP$ (again see \cite[Theorem 1.3]{brown-peterson1}), and that $\bo_{(p)}$ splits off $BP\la 1\ra$ as a wedge summand. Furthermore, 
the natural map 
$$\Omega^{SO}_m(X)\otimes \Zp = \pi_m(MSO_{(p)}\wedge X) \to \pi_m(\bo_{(p)}\wedge X) =\ko_m(X; \Zp)$$ induced by the map of spectra $MSO_{(p)} \to \bo_{(p)}$  has a corresponding  splitting, and therefore contains the map $BP_m(X) \to 
BP\la 1\ra_*(X)$ as a direct summand.
In short, it will be enough to find an example for $BP$-theory. 

\begin{proof}[The proof of Lemma \ref{ex: finite complex}] We fix an odd prime $p$.  Consider the following stable complexes\footnote{I am very much indebted to David Johnson and W.~Stephen Wilson for providing this example.}. There is a self-map $f\colon \Sigma^{2(p-1)}X_0 \to X_0$ realizing multiplication by $v_1$ on $BP_*(X_0)$, where $X_0=M(p)$ denotes the mod $p$ Moore spectrum.The cofibre of $f$ has a finite complex model  (usually called $V(1)$, see Smith \cite[Theorem~1.5]{smith0}).

Let $X_1 = M(p, v_1^n)$ denote the cofibre of the $n$-fold iterate $f^n$, $n \geq 2$, which realizes
multiplication by $v_1^n$. Therefore
$BP_*(X_1) = BP_*/(p, v_1^n)$.  Furthermore, by Hopkins and J.~H.~Smith \cite[Theorem~9]{hopkins-smith1},  there exists a self-map $g\colon \Sigma^{2m(p^2-1)}X_1 \to X_1$ realizing multiplication by $v_2^m$ on $BP_*(X_1)$,  for some large $m\geq 1$. We let
$$X := M(p, v_1^n, v_2^m)$$
 denote the cofibre of $g$. Then $BP_*(X) = BP_*/(p, v_1^n, v_2^m)$. However, multiplication by $v_2$ is zero on $BP\la 1\ra_*$-homology, and so 
$$BP\la 1\ra_*(X) \cong BP\la 1\ra_*(X_1) \oplus BP\la 1\ra_{*- k -1}(X_1),$$
where $k = 2m(p^2-1)$.
Since $BP_*(X)$ is concentrated in even dimensions, the odd dimensional classes are not in the image of the natural map $BP_*(X) \to BP\la 1\ra_*(X)$. 

For the example, we may choose any non-zero multiple of $v_1$ in the odd-dimensional summand of  $BP\la 1\ra_{*}(X)$. Such an element is in the kernel of the Hurewicz homomorphism (see \cite[p.~328]{johnson-wilson1}), and is annihilated by $v_1^{n-1}$, but is not in the image from $BP_*(X)$.
  \end{proof}
\end{example}

\providecommand{\bysame}{\leavevmode\hbox to3em{\hrulefill}\thinspace}
\providecommand{\MR}{\relax\ifhmode\unskip\space\fi MR }
\providecommand{\MRhref}[2]{%
  \href{http://www.ams.org/mathscinet-getitem?mr=#1}{#2}
}
\providecommand{\href}[2]{#2}

\end{document}